\newcommand*{\scirc}{\mathrel{\scalebox{0.8}{$\cdot$}}}
\newcommand{\res}{\upharpoonright}
\theoremstyle{plain}
\newtheorem{theorem}{Theorem}
\newtheorem{corollary}[theorem]{Corollary}
\newtheorem{lemma}[theorem]{Lemma}
\newtheorem{proposition}[theorem]{Proposition}
\newtheorem{definition}[theorem]{Definition}
\theoremstyle{remark}
\newtheorem*{claim}{Claim}
\begin{document}

\title{Finite Ramsey theory through category theory}

\author{S{\l}awomir Solecki}

\address{Department of Mathematics\\
Cornell University\\
Ithaca, NY 14853}

\email{ssolecki@cornell.edu}

\urladdr{https://e.math.cornell.edu/people/ssolecki/}

\thanks{Research supported by NSF grant DMS-1954069.}

\subjclass[2010]{05D10, 05C55, 18A22}

\keywords{Ramsey theory, category theory, frank functors}

\begin{abstract} We present a new, category theoretic point of view on finite Ramsey theory. Our aims are as follows:  
\begin{enumerate}
\item[---] to define the 
category theoretic notions needed for the development of finite Ramsey Theory, 

\item[---] to state, in terms of these notions, the general fundamental Ramsey results (of
which various concrete Ramsey results are special cases), and 

\item[---] to give self-contained proofs within the category theoretic framework of these general results. 
\end{enumerate} 
We also provide some concrete illustrations of the general method. 
\end{abstract}

\maketitle

%\epigraph{``unexplained beauty arouses an irritation in me"}{William Empson, {\em Seven Types of Ambiguity}}
%p.9

\tableofcontents

\section{Introduction}

For $r\in {\mathbb N}$, by an {\bf $r$-coloring} of a set $X$ we understand a function on $X$ with at most $r$ values. If $C$ is a category, by ${\rm ob}(C)$ we denote the class 
of all objects of $C$; for $a,b\in {\rm ob}(C)$, ${\rm hom}(a,b)$ stands for the class of all morphisms in $C$ from $a$ to $b$.

It has been known for some time, at least since the early 1970s, that Ramsey theoretic statements are naturally expressed in the language of category theory. 
Given a category $C$,  
an object $a\in {\rm ob}(C)$ is said to have the {\bf Ramsey property} if for each object $b\in {\rm ob}(C)$ and $r\in {\mathbb N}$, 
there is $c\in {\rm ob}(C)$ such that for each $r$-coloring of ${\rm hom}(a, c)$, 
there is $g\in {\rm hom}(b,c)$ that makes the set 
\[
g\cdot {\rm hom}(a,b)= \{ g\cdot f\mid f\in {\rm hom}(a,b)\}
\] 
monochromatic. 
One then calls $C$ a {\bf Ramsey category} if each of its objects has the Ramsey property. 
It is also known that the following notion is a useful refinement of the Ramsey property. For $a,b \in {\rm ob}(C)$, 
the {\bf Ramsey degree} of the pair $a, b$ is the smallest natural number $k$, if such a number exists, such that for each 
$r\in {\mathbb N}$, there 
exists $c\in {\rm ob}(C)$ with the property that, for each $r$-coloring $\chi$ of $\hom(a,c)$, there exists $g\in \hom(b,c)$ with 
$\chi$ attaining at most $k$ values on $g\cdot \hom(a,b)$. If such a number $k$ does not exist, we say that the Ramsey 
degree of the pair $a,b$ is $\infty$. We write 
\begin{equation}\label{E:rdt}
{\rm rd}(a,b)
\end{equation}
for the Ramsey degree of $a,b$. More studied, and often more useful, notion is that of Ramsey degree of a single object. {\bf Ramsey degree of} $a$ 
is defined by 
\begin{equation}\label{E:rdo}
{\rm rd}(a)= \sup_{b\in {\rm ob}(C)} {\rm rd}(a,b). 
\end{equation} 
Having the Ramsey property is expressible in terms of the Ramsey degree---$a$ has the Ramsey property precisely when ${\rm rd}(a)= 1$. 
This observation leads to an important easing of the condition of $C$ being Ramsey to the condition of 
$C$ having {\bf finite Ramsey degrees}, which asserts that, for each $a\in {\rm ob}(C)$, ${\rm rd}(a)<\infty$. 
The importance of finiteness of Ramsey 
degree stems partly from it being a refinement of the Ramsey property and partly, and more significantly, from its relevance to 
topological dynamics as indicated in \cite{KPT} and, especially, in \cite{Zuc}.

Going beyond just formulating Ramsey theoretic notions, 
several papers used the language of category theory to carry out proofs of finite Ramsey theoretic statements; 
see, for example, \cite{GLR}, \cite{SJ}, \cite{Lee}, \cite{Mas2}, and \cite{Mas}. This was usually done by identifying 
a category $A$ that was known to be Ramsey or to have a related property, and then transferring Ramseyness or the related property from $A$ 
to another category $B$ by finding 
an, often subtle, connection between $A$ and $B$. Another distinct approach was presented by Leeb in \cite{Lee}. 
In a number of specific categories, he verified 
certain identities, called by him Pascal identities, and then working separately in each of these categories, but using similarly structured 
arguments, he showed that the categories are Ramsey, from which various classical Ramsey theorems followed. 
In \cite{SJ}, certain Ramsey theoretic constructions were revealed to be canonical category theory constructions. 
In his paper \cite{Gro}, Gromov advocated for a broad use of category theory in Ramsey theory.

In present paper, expanding and simplifying the author's approach of \cite{Sol} and following the spirit of \cite{Gro}, 
we present a way of seeing finite Ramsey theory in the category theoretic terms that is global, in the sense that 
the whole theory is developed from scratch in the category theoretic framework as opposed to transferring specific Ramsey results between categories. 
We change the usual perspective of considering categories as basic to considering functors as fundamental to the development. 
More precisely, we formulate a general pigeonhole principle (P) for functors between categories. On our view, this is the main notion of finite Ramsey theory, and 
the theory is concerned with proving (P) for various functors. To this end,  
we show that (P) persists over several natural operations applied to functors. 
Further, we formulate a localized version (FP) of (P) and show that, for essentially all relevant functors, (FP) implies (P). 
The advantage of having this implication resides in a relative ease of proving the localized pigeonhole principle (FP) for many functors in comparison 
with proving (P) itself. 
These general theorems allow us to establish results giving an upper bound on Ramsey degrees. As explained above, such upper estimates are generalizations of 
the statement that the category is Ramsey thereby making it possible to deduce various concrete Ramsey theorems. 

It may be worth emphasizing that the point we are making is not so much that the concrete Ramsey theorems 
can be derived using our methods, it is more that they are particular cases (for concrete functors) of our 
results. We give several examples of theorems that can be seen as such---Ramsey's original theorem, the product Ramsey theorem, 
the Hales--Jewett theorem, Fouch{\'e}'s 
Ramsey theorem for trees \cite{Fou}. They should be viewed merely as illustrations since many other theorems, for example, the Ramsey theorems for trees due 
to various mathematicians that are surveyed in \cite{Sol2} and treated there with the methods of \cite{Sol}, the Ramsey theorems considered in \cite{Sol}, 
for example, the dual Ramsey theorem of Graham--Rothschild, and the dual Ramsey theorem for trees from \cite{Solt}, 
can all be seen as, in essence, particular cases of the general theorems of this paper. 

The present paper is a contribution to the efforts aimed at unifying Ramsey theoretic results, as in \cite{HN} for structural Ramsey theory and in \cite{Tod} for infinite dimensional Ramsey theory. 
Our approach here builds on \cite{Sol}. The main advances with respect to that paper consist of the use of categories and functors 
instead of various types of ad hoc structures (in particular, eliminating partial or linear orders from the general structures), weakening of 
the localized pigeonhole principle (from $\rm (LP)$ in \cite{Sol} to $\rm (FP)$ here), obtaining upper estimates on the Ramsey degree rather than just Ramsey statements, 
and an overall substantial simplification of the presentation.

The following conventions will be used throughout. By ${\mathbb N}$ we understand the set of all natural numbers including $0$. 
For $m, n\in {\mathbb Z}$, we write 
\[
[m, n] = \{ i\in {\mathbb Z}\mid m\leq i\leq n\}, 
\]
For $n\in {\mathbb N}$ and $m=1$, we shorten the above piece of notation to 
\[
[n]= [1, n]. 
\]
In particular, $[0]=\emptyset$. The cardinality of a set $x$ will be denoted by $| x|$. So, if $x$ is finite, then $|x|\in {\mathbb N}$. 
For a functor $\gamma$ defined on a category $C$ and for an object $a$ of $C$ and a morphism $f$ of $C$, we often write 
\[
\gamma a\;\hbox{ and }\;\gamma f
\]
for $\gamma(a)$ and $\gamma(f)$.

\smallskip

I would like to thank Sebastian Junge, whose remarks improved the presentation of the material in this paper.

\section{Condition (P) and frank functors}

\subsection{Formulation of condition (P)} 

We regard the following statement as the fundamental pigeonhole principle for a functor $\delta\colon C\to D$ between the categories $C$ and $D$. 
In a nutshell, it says that, in a suitable sense, $\delta$ 
controls colorings. 

\begin{definition}\label{D:defp} 
Let $\delta$ be a functor defined on a category $C$, and let $a,b\in {\rm ob}(C)$. 
We declare $\delta$ to {\bf fulfill {\rm (P)} at} $a, b$ if 
for each $r\in {\mathbb N}$, there exists $c\in {\rm ob}(C)$ such that, for each $r$-coloring $\chi$ of $\hom(a,c)$, there
exists $g\in \hom(b,c)$ with 
\[
\delta f_1 = \delta f_2\Longrightarrow \chi(g\cdot f_1)=\chi(g\cdot f_2),
\]
for all $f_1, f_2\in \hom(a,b)$.
\end{definition}

For ease of phrasing, we adopt the following conventions. 
We say that $\delta$ {\bf fulfills {\rm (P)} at $a\in {\rm ob}(C)$} if it fulfills {\rm (P)} at $a,b$ for all $b\in {\rm ob}(C)$. 
We say simply that $\delta$ {\bf fulfills {\rm (P)}} if it fulfills {\rm (P)} at all $a,b\in {\rm ob}(C)$. 
Note that the condition of fulfilling (P) at $a$ is stronger than fulfilling (P) at $a,b$. We will be mostly interested in this stronger condition, but using 
the weaker condition is somewhat easier and permits us to make more precise statements in some situations.

\medskip

The fundamental connection of property (P) with Ramsey theoretic notions of Ramsey degrees (recall \eqref{E:rdt} and \eqref{E:rdo} here) 
goes through the following proposition, which improves the trivial bound 
${\rm rd}(a,b)\leq |{\rm  hom}(a,b)|$.

\begin{proposition}\label{P:bas} 
Let $\Delta$ be a family of functors whose domains are all equal to $C$. 
\begin{enumerate} 
\item[(i)] If each $\delta\in \Delta$ fulfills {\rm (P)} at $a, b\in {\rm ob}(C)$, then 
\[
{\rm rd}(a,b)\leq \min_{\delta\in \Delta} \big|\delta\big({\rm  hom}(a,b)\big)\big|. 
\]

\item[(ii)] If each $\delta\in \Delta$ fulfills {\rm (P)} at $a\in {\rm ob}(C)$, then 
\[
{\rm rd}(a)\leq \sup_{b\in {\rm ob}(C)}\min_{\delta\in \Delta} \big|\delta\big({\rm  hom}(a,b)\big)\big|. 
\]
\end{enumerate} 
\end{proposition}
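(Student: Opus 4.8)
The plan is to obtain both parts from a single value-counting observation, deriving part (ii) from part (i) by passing to a supremum over $b$. The point to exploit first is that the quantifier structure of (P) at $a,b$ --- for each $r$, there is $c$ such that for each $r$-coloring $\chi$ there is a witness $g$ --- matches verbatim the structure of the definition of ${\rm rd}(a,b)$. So once I control the number of colors that $\chi$ attains on $g\cdot{\rm hom}(a,b)$, the bound on the Ramsey degree follows immediately.

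For part (i), I would fix $a,b$ and an arbitrary $\delta\in\Delta$ and prove the sharper inequality ${\rm rd}(a,b)\leq|\delta({\rm hom}(a,b))|$; minimizing over $\delta\in\Delta$ then yields the stated bound. Write $k=|\delta({\rm hom}(a,b))|$. If $k$ is infinite there is nothing to prove, so assume $k$ is finite. Fixing $r\in{\mathbb N}$, condition (P) for $\delta$ at $a,b$ supplies an object $c$ such that every $r$-coloring $\chi$ of ${\rm hom}(a,c)$ admits $g\in{\rm hom}(b,c)$ with $\delta f_1=\delta f_2\Rightarrow\chi(g\cdot f_1)=\chi(g\cdot f_2)$ for all $f_1,f_2\in{\rm hom}(a,b)$.

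The crux --- and the only step I expect to require care --- is to read this implication as a factorization and count colors. It says precisely that $f\mapsto\chi(g\cdot f)$ is constant on the fibers of $f\mapsto\delta f$; since the latter maps ${\rm hom}(a,b)$ onto $\delta({\rm hom}(a,b))$, the former factors through it. Hence the set of colors $\chi$ attains on $g\cdot{\rm hom}(a,b)$ is the image of a map whose domain is $\delta({\rm hom}(a,b))$, so it has at most $k$ elements. As $r$ was arbitrary, this is exactly the defining condition for ${\rm rd}(a,b)\leq k$, finishing part (i).

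For part (ii), I would note that $\delta$ fulfilling (P) at $a$ means it fulfills (P) at $a,b$ for every $b\in{\rm ob}(C)$, so part (i) applies at each $b$ to give ${\rm rd}(a,b)\leq\min_{\delta\in\Delta}|\delta({\rm hom}(a,b))|$. Taking the supremum over $b$ and using ${\rm rd}(a)=\sup_b{\rm rd}(a,b)$ delivers the inequality; the comparisons take place in ${\mathbb N}\cup\{\infty\}$, where this monotone passage is unproblematic. I anticipate no genuine obstacle beyond the factorization-and-counting step in part (i).
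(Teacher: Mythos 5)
Your proof is correct and follows the same route as the paper: reduce (ii) to (i) via ${\rm rd}(a)=\sup_b {\rm rd}(a,b)$, and for (i) observe that the implication in (P) makes $f\mapsto\chi(g\cdot f)$ factor through $\delta$, so at most $\big|\delta\big(\hom(a,b)\big)\big|$ colors appear on $g\cdot\hom(a,b)$. The paper states this factorization-and-counting step as "an immediate consequence" of (P); you have simply written it out in full.
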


\begin{proof} By the definition of ${\rm rd}(a)$ in \eqref{E:rdo}, it suffices to show (i). 
This amounts to proving that 
given $r\in {\mathbb N}$, there exists $c$ such that for each $r$-coloring of $\hom(a,c)$, there exists $g\in \hom(b,c)$ 
with the number of colors attained on $g\cdot \hom(a,b)$ bounded by $|\delta\big( \hom(a,b)\big)|$. This statement 
is an immediate consequence of $\delta$ fulfilling (P) at $a$. 
\end{proof}

\subsection{Definition of frank functors}

When dealing with property (P) for a functor $\delta$, sometimes we will need to make an additional surjectivity assumption on $\delta$. This surjectivity assumption is 
critical and seems interesting enough to isolate it here.

\begin{definition}
A functor $\delta\colon C\to D$ between two categories $C$ and $D$ is called {\bf frank} if, 
for all $a\in {\rm ob}(C)$ and $b'\in {\rm ob}(D)$, 
there exists $b\in {\rm ob}(C)$ with 
\begin{equation}\notag
\delta (b)= b'\;\hbox{ and }\; \delta\big(\hom(a,b)\big) =\hom(\delta a, \delta b). 
\end{equation} 
\end{definition}

In the second equality in the definition above, the inclusion $\delta\big(\hom(a,b)\big) \subseteq \hom(\delta a, \delta b)$ follows just from $\delta$ being a functor. 
Therefore, the point of the equality is that $\delta$ is surjective as a function from $\hom(a,b)$ to $\hom(\delta a, \delta b)$.

\begin{lemma}\label{L:trz} 
\begin{enumerate}
\item[(i)] The identity functor is frank. 

\item[(ii)] The composition of frank functors is a frank functor. 

\item[(iii)] If $\delta\colon C\to D$ is a frank functor, then, for all $d_1, d_2\in {\rm ob}(D)$, there are $c_1, c_2\in {\rm ob}(C)$ such that 
\[
\delta(c_1)=d_1,\, \delta(c_2)= d_2,\hbox{ and }\, \delta\big(\hom(c_1, c_2)\big) =\hom(d_1, d_2). 
\] 
\end{enumerate}
\end{lemma}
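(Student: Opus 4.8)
The plan is to dispatch (i) and (ii) by direct manipulation of the defining equalities and to place the only real content into (iii), where frankness is applied twice. For (i), given $a\in{\rm ob}(C)$ and $b'\in{\rm ob}(C)$ I would take $b=b'$; then $\mathrm{id}(b)=b'$ and $\mathrm{id}\big(\hom(a,b)\big)=\hom(a,b)=\hom(\mathrm{id}\,a,\mathrm{id}\,b)$, so both conditions hold trivially. For (ii), with $\delta\colon C\to D$ and $\epsilon\colon D\to E$ frank, I would show $\epsilon\circ\delta$ is frank by chaining the two hypotheses backwards through the intermediate category. Given $a\in{\rm ob}(C)$ and $c'\in{\rm ob}(E)$, first apply frankness of $\epsilon$ to the object $\delta a\in{\rm ob}(D)$ and the target $c'$ to get $b\in{\rm ob}(D)$ with $\epsilon b=c'$ and $\epsilon\big(\hom(\delta a,b)\big)=\hom(\epsilon\delta a,c')$; then apply frankness of $\delta$ to $a$ and the target $b$ to get $c\in{\rm ob}(C)$ with $\delta c=b$ and $\delta\big(\hom(a,c)\big)=\hom(\delta a,b)$. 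Substituting one equality into the other yields $(\epsilon\circ\delta)c=c'$ and $(\epsilon\circ\delta)\big(\hom(a,c)\big)=\hom(\epsilon\delta a,c')=\hom\big((\epsilon\circ\delta)a,(\epsilon\circ\delta)c\big)$, which is exactly frankness of the composite.

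For (iii) the point is that frankness is a one-sided statement---it fixes a source object in $C$ and produces a preimage of a chosen target together with surjectivity on the outgoing hom-set---whereas (iii) is the symmetric statement about two target objects of $D$. I would bridge the gap by applying frankness twice. First, fixing any object of $C$ and applying frankness with target $d_1$, I obtain $c_1\in{\rm ob}(C)$ with $\delta c_1=d_1$; here only the object-surjectivity delivered by frankness is used, and the accompanying hom-surjectivity is discarded. Second, I apply frankness with source $c_1$ and target $d_2$ to obtain $c_2\in{\rm ob}(C)$ with $\delta c_2=d_2$ and $\delta\big(\hom(c_1,c_2)\big)=\hom(\delta c_1,\delta c_2)=\hom(d_1,d_2)$, which is the desired conclusion.

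The main obstacle is the very first step of (iii): to invoke frankness at all I need a source object, i.e.\ some element of ${\rm ob}(C)$, in order to produce the preimage $c_1$ of $d_1$. This presupposes that ${\rm ob}(C)$ is nonempty, which I take to be part of the ambient setup; without it the statement can genuinely fail, since any functor out of the empty category is vacuously frank yet need not be object-surjective. Granting a source object, the remainder is pure substitution into the defining equalities, so I do not expect any further difficulty.
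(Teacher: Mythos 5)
Your proposal is correct and follows essentially the same route as the paper: (i) and (ii) by direct substitution into the defining equalities (the paper leaves these to the reader), and (iii) by invoking frankness twice, first to obtain a preimage $c_1$ of $d_1$ and then, with $c_1$ as source, to obtain $c_2$ together with the hom-surjectivity. Your additional remark that the first step of (iii) tacitly requires ${\rm ob}(C)\neq\emptyset$ is a valid observation about a degenerate case that the paper passes over silently, but it does not change the argument.
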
 

\begin{proof} Points (i) and (ii) are almost immediate, and we leave checking them to the reader. To see (iii), given $d_1$, use frankness of $\delta$ 
to find $c_1$ with $\delta(c_1)= d_1$. Now use frankness of $\delta$ again to find $c_2$ with $\delta(c_2)= d_2$ and $\delta\big(\hom(c_1, c_2)\big) =\hom(d_1, d_2)$. 
\end{proof}

\subsection{Unifying assumptions} 

Without harming applicability in finite Ramsey theory of the general theorems presented below, one may always make the following unifying assumptions: 
\begin{enumerate} 
\item[---] {\bf functors are frank} 

\item[---] {\bf categories are such that $\hom(a,b)$ is finite for all objects $a,b$}. 
\end{enumerate}
All the general theorems proved below hold under these assumptions. 
Of course, when stating the theorems, we make assumptions that are appropriate (minimal) for each theorem.

\subsection{Examples---frank functors fulfilling (P)}\label{Su:exp}
We describe here some examples of frank functors for with property (P). 

\smallskip

{\bf 1.} Let $C$ be a category. The identity functor $C\to C$ is frank and fulfills (P) at each pair of objects of $C$.

\smallskip

{\bf 2.} We define here a category $P$ and a frank functor $\partial_P\colon P\to P$ that will play an important auxiliary role in proving the Hales--Jewett theorem. 
Fulfilling of condition (P) by $\partial_P$ is the usual pigeonhole principle. 

\noindent Objects of $P$ are 
\begin{enumerate}
\item[---] pairs $(k,i)$, where $k\in {\mathbb N}$, $k\geq1$ if $i\in \{ 0,  2\}$, and $k\geq2$ if $i=1$. 
\end{enumerate}
Morphisms of $P$ from an object $(k,i)$ to an object $(k',i')$ will be certain functions from $[k']$ to $[k]$, whose 
nature will depend on the second coordinates $i$ and $i'$. We define the morphisms as follows: 
\begin{enumerate}
\item[---] $p\in {\rm hom}\big((l,2), (m,2)\big)$, if $p\colon [m]\to [l]$ is a surjection with $p(i)\leq p(i+1)\leq p(i)+1$ for $i\in [m-1]$; 
\item[---] $x\in {\rm hom}\big((k_1,1), (l,2)\big)$, if $x\colon [l]\to [k_1]$ and there are $1\leq a< a+1< b\leq l$ such that $x$ is 
constant on the intervals $[1,a]$, $[a+1, b-1]$, and $[b,l]$ and 
\[
x(1)= k_1\; \hbox{ and }\; x(l)=k_1-1;
\]
\item[---] $x\in {\rm hom}\big((k_0,0), (l,2)\big)$, if $x\colon [l]\to [k_0]$ and there are $1\leq a< a+1< b\leq l$ such that $x$ is 
constant on the intervals $[1,a]$, $[a+1, b-1]$, and $[b,l]$ and
\[
x(1)= x(l)=k_0; 
\]
\item[---] there are no other morphisms except for identities. 
\end{enumerate}
Composition of morphisms in $P$ will be the composition of functions taken with reverse order: 
\begin{enumerate}
\item[---]  for $p \in {\rm hom}\big((l,2), (m,2)\big)$ and $x\in {\rm hom}\big((k,i), (l,2)\big)$, with $i\in \{0,1\}$, let  
\[
p\cdot x = x\circ p. 
\]

\item[---]  for morphisms $p \in {\rm hom}\big((l,2), (m,2)\big)$ and $q \in {\rm hom}\big((m,2), (n,2)\big)$, let  
\[
q\cdot p = p\circ q. 
\]

\end{enumerate} 
The functor $\partial_P\colon P\to P$ acts non-trivially only on objects of the form $(k,1)$ and on morphisms in ${\rm hom}\big( (k,1), (l,2)\big)$, on which it lowers 
the top value by $1$. So the functor $\partial_P$ is defined by: 
\begin{enumerate}
\item[---] $\partial_P (k,i) = (k,i)$, for $i\not= 1$, and $\partial_P (k,1)= (k-1, 0)$;

\item[---] $\partial_P p = p$, for $p\in {\rm hom}\big((l,2), (m,2)\big)$;

\item[---] $\partial_P x = x$, for $x\in {\rm hom}\big((k_0,0), (l,2)\big)$; 

\item[---] $\partial_P x = \min (x, k_1-1)$, for $x\in {\rm hom}\big((k_1,1), (l,2)\big)$. 
\end{enumerate}

\begin{lemma}\label{L:pfr}
\begin{enumerate}
\item[(i)] $\partial_P\colon P\to P$ is a functor and it is frank. 

\item[(ii)] $\partial_P$ fulfills (P). 
\end{enumerate}
\end{lemma}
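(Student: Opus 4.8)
The plan is to reduce everything to understanding the single family of morphisms that $\partial_P$ moves, namely $\hom\big((k_1,1),(l,2)\big)$. A morphism $x$ there is given by two cut points $1\le a<a+1<b\le l$ and a middle value $c\in[k_1]$: it equals $k_1$ on $[1,a]$, equals $c$ on $[a+1,b-1]$, and equals $k_1-1$ on $[b,l]$. Since $\partial_P$ replaces the value $k_1$ by $k_1-1$ throughout, $\partial_P x$ equals $k_1-1$ on $[1,a]\cup[b,l]$ and equals $\min(c,k_1-1)$ on $[a+1,b-1]$, so it is a morphism in $\hom\big((k_1-1,0),(l,2)\big)$. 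The combinatorial fact I would establish first and reuse throughout is this: $\partial_P x$ is the constant map with value $k_1-1$ exactly when $c\in\{k_1-1,k_1\}$, and, as a function, such an $x$ is exactly a single-descent map $x_t$ (equal to $k_1$ on $[1,t]$ and to $k_1-1$ on $[t+1,l]$) for some $t\in[1,l-1]$; whereas for $c\le k_1-2$ the image $\partial_P x$ genuinely has three blocks and so determines $(a,b,c)$, hence $x$, uniquely.

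For part (i), functoriality on objects is read off the definition, and the only composition needing a check is $\partial_P(p\cdot x)=\partial_P p\cdot\partial_P x$ for $x\in\hom\big((k_1,1),(l,2)\big)$ and $p\in\hom\big((l,2),(m,2)\big)$; since $p\cdot x=x\circ p$ and $\partial_P$ acts by applying $\min(\,\cdot\,,k_1-1)$ to values, this reduces to the pointwise identity $\min(x,k_1-1)\circ p=\min(x\circ p,k_1-1)$, which holds because precomposition does not touch values. Identities go to identities by the object rule. For frankness I would, given $a$ and a target $b'$ in the image of $\partial_P$ (every object of type $0$ or $2$; I read the codomain as the full subcategory on these objects, the only reading under which frankness can hold, since type-$1$ objects are never values of $\partial_P$), exhibit a preimage $b$ and check the hom-equality. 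When $b'=(l,2)$ take $b=b'$; the one nontrivial point is that $\partial_P$ maps $\hom\big((k_1,1),(l,2)\big)$ onto $\hom\big((k_1-1,0),(l,2)\big)$, which is immediate from the first paragraph: the constant target is hit by any $x$ with $c=k_1$, and each three-block target with middle value $d\le k_1-2$ is hit by the $x$ with $c=d$. When $b'=(k,0)$ one takes $b=(k,0)$ or $b=(k+1,1)$ according to which (possibly empty) hom-sets have to be matched.

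For part (ii) I would first dispose of the trivial pairs: whenever $a$ has type $0$ or $2$, $\partial_P$ is the identity on $\hom(a,b)$, so the hypothesis $\partial_P f_1=\partial_P f_2$ forces $f_1=f_2$ and (P) holds with $c=b$ and $g=\mathrm{id}_b$; the same triviality covers every pair for which $\hom(a,b)$ is empty or a singleton. The sole substantive case is $a=(k_1,1)$, $b=(l,2)$ with $l\ge3$. By the fiber description above, $\partial_P f_1=\partial_P f_2$ with $f_1\ne f_2$ occurs precisely when both $f_1,f_2$ are single-descent maps $x_t$, so (P) at this pair asks for an object $c=(n,2)$ such that every $r$-coloring $\chi$ of $\hom\big((k_1,1),(n,2)\big)$ admits a monotone surjection $g=p\colon[n]\to[l]$ making $\chi(p\cdot x_t)$ independent of $t\in[1,l-1]$.

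Finally I would carry out this reduction and invoke the pigeonhole principle, which is where the content sits. Since $p\cdot x_t=x_t\circ p$ is again single-descent, namely the single-descent map on $[n]$ with cut point $s_t=|p^{-1}([1,t])|$, and since the cut points $s_1<\dots<s_{l-1}$ can be made any prescribed $(l-1)$-subset of $[1,n-1]$ by choosing the block sizes of $p$, the requirement becomes: for every $r$-coloring of the $n-1$ single-descent maps on $[n]$ (a set in bijection with $[1,n-1]$) there is a monochromatic subset of size $l-1$. Taking $n=r(l-2)+2$ gives $n-1>r(l-2)$, so some color class has at least $l-1$ elements; placing $\{s_t\}$ inside it and letting $p$ realize these cuts yields the desired $g$. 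The main obstacle, common to both parts, is exactly the combinatorial identification of the first paragraph—pinning down the image and the fibers of $\partial_P$ on $\hom\big((k_1,1),(l,2)\big)$—after which (i) is routine and (ii) is precisely the ordinary pigeonhole principle.
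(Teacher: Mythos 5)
Your proof is correct, and for part (ii) it is essentially the paper's own argument: identify the fibers of $\partial_P$ on $\hom\big((k_1,1),(l,2)\big)$ (all single-descent maps collapse to the constant map $k_1-1$, every other fiber is a singleton), color the single-descent maps on $[n]$, apply the ordinary pigeonhole principle, and realize a monochromatic set of cut points as the jump set of a morphism $p\in\hom\big((l,2),(n,2)\big)$. The paper takes $m=(l-1)r+2$ and a monochromatic set of size $l$ where you take $n=r(l-2)+2$ and size $l-1$; both work, yours is marginally tighter. Your explicit fiber analysis in the first paragraph is exactly what the paper uses implicitly when it asserts that $\partial_P x=\partial_P x'$ forces $x=x'$ or both to be single-descent. (Incidentally, your orientation of the single-descent maps, with $x(1)=k_1$ and $x(l)=k_1-1$, matches the definition of the category; the paper's own proof of (ii) swaps these endpoint values and writes $m$ for $l$ in places, which are typos.)

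The one substantive point worth highlighting is your treatment of part (i), which the paper dismisses as ``straightforward.'' You are right that, with codomain literally taken to be $P$, frankness fails: no object maps to $(k,1)$ under $\partial_P$, so the surjectivity-on-objects demand in the definition of frank cannot be met. Your fix --- reading the codomain as the full subcategory on objects of types $0$ and $2$ --- is the correct one, and it is harmless downstream: the later uses of frankness of $\partial_P$ (via Lemma~\ref{L:prfr}, Theorem~\ref{T:pro}, and Corollary~\ref{C:prr}) only ever need preimages of objects and hom-sets in that image subcategory. Your verification of the hom-surjectivity (the constant target hit by $c=k_1$, the genuinely three-block targets hit by matching the middle value) and your case split for $b'=(k,0)$ depending on whether $a=(k+1,1)$ are both correct.
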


\begin{proof} Point (i) is straightforward, and we leave checking it to the reader. 

To see (ii), we check (P) at $(k, 1)$, $(l, 2)$, which is the only not entirely trivial case. 
After fixing $r\in {\mathbb N}$, we need to find an object $(m,2)$ for which the conclusion of (P) holds. It is good to keep in mind that all we are doing 
is proving a version of the standard pigeonhole principle. 

Put $m=(l-1)r+2$. Let $\chi$ be an $r$-coloring of 
${\rm hom}\big((k_1, 1), (m,2)\big)$, that is, an $r$-coloring of the set of all $x\colon [m]\to [k_1]$, for which there exist 
$1\leq a_x< a_x+1< b_x\leq m$ such that $x$ is 
constant on the intervals $[1,a_x]$, $[a_x+1, b_x-1]$, and $[b_x,m]$, and $x(1)= k_1-1$ and $x(m)= k_1$. Consider the $r$-coloring 
$\chi'$ of $[m-1]$ given by 
\[
\chi'(j) = \chi(x_j),
\]
where $x_j\colon [m]\to [k_1]$, with $j\in [m-1]$, is such that 
\[
x_j\res [j] = k_1-1\;\hbox{ and }\; x_j\res [j+1,m] = k_1. 
\]
Note that $x_j\in {\rm hom}\big((k_1, 1), (m,2)\big)$, so $\chi(x_j)$ is defined. By our choice of $m$, $\chi'$ is constant on a subset of 
$[m-1]$ of size $l$. So there exists $p\colon [m]\to [l]$, a surjection with $p(j)\leq p(j+1)\leq p(j)+1$ for $j\in [m-1]$, that is, 
$p\in {\rm hom}\big( (l, 2), (m,2)\big)$, and such that $\chi'$ is constant on the set 
\begin{equation}\notag
J= \{ j\in [m-1]\mid p(j)< p(j+1)\}.
\end{equation}
The above condition on $\chi'$ means that $\chi(x_j)$ is constant as $j$ varies over $J$.  

We claim that this $p$ works. Indeed, let $x, x'\in {\rm hom}\big( (k_1,1), (l,2)\big)$ be such that $\partial_P x = \partial_P x'$. Then either $x=x'$ or 
there are $i, i'\in [l-1]$ such that 
\[
\begin{split}
x\res [i] = k_1-1,&\; x\res [i+1,m] = k_1,\\ 
x'\res [i'] = k_1-1,&\; x'\res [i'+1,m] = k_1. 
\end{split}
\]
In the first case, clearly $\chi(x\circ p) = \chi(x'\circ p)$. In the second case, for $j,j'\in J$ specified by 
\[
i=p(j)<p(j+1)\;\hbox{ and }\; i'=p(j')<p(j'+1), 
\]
we have $x\circ p = x_j$ and $x'\circ p =x_{j'}$. 
Thus, we get 
\[
\chi(x\circ p) = \chi(x_j)= \chi(x_{j'})= \chi(x'\circ p),
\]
as required. 
\end{proof}

\smallskip 

{\bf 3.} The following example does fulfill condition (P), but we postpone its verification till we have a general result from which it will follow readily. The 
example will serve to prove the standard Ramsey theorem. 

We define a category $R$ and a functor $\partial_R \colon R\to R$. The objects of the underlying category will be natural numbers $n$ and morphisms will be, 
essentially,  
subsets $x$ of $[n]$. For technical reasons, for such a set $x$, we will need to remember which $[n]$ it is designated to be a subset of; 
so the morphisms will actually be pairs $(x,n)$. 
For $n\in {\mathbb N}$, we write 
\begin{equation}\label{E:dotmin}
n\dotdiv 1 = \max( n-1, 0). 
\end{equation} 

Objects of $R$ are: 
\begin{enumerate}
\item[---] $n$, for $n\in {\mathbb N}$. 
\end{enumerate}

Morphisms of $R$ are described as follows: 
\begin{enumerate}
\item[---] $(x,n)\in {\rm hom}(m, n)$, for $x\subseteq [n]$ and $|x|= m$.
\end{enumerate}

Composition in $R$ is defined by the following rule:  
\begin{enumerate}
\item[---]  for morphisms $(x,m) \in {\rm hom}(l,m)$ and $(y,n) \in {\rm hom}(m,n)$, let  
\[
(y,n) \cdot (x,m) = (f_y(x), n),
\]
where $f_y\colon [m]\to y$ is the unique increasing bijection. 
\end{enumerate} 

The functor $\partial_R\colon R\to R$ is defined using the notation set up by \eqref{E:dotmin}:
\begin{enumerate}
\item[---] $\partial_R n = n\dotdiv 1$; 

\item[---] $\partial_R (x,n) = (x\setminus \{ \max x\}, n\dotdiv 1)$, for $(x, n)\in {\rm hom}(m,n)$ with $x\not=\emptyset$; 

\item[---] $\partial_R (\emptyset, n) = (\emptyset, n\dotdiv 1)$, for $(\emptyset, n)\in {\rm hom}(0,n)$.
\end{enumerate}

\section{Propagating condition (P)}

In this section, we prove three theorems that let us transfer property (P) from one functor to another.

\subsection{Composition}

The following theorem asserts that, under appropriate assumptions, property (P) is preserved under composition of functors. 

\begin{theorem}\label{T:semig}
Let $\gamma \colon C\to D$ and $\delta\colon D\to E$ be functors with $\gamma$ being frank. Let $a,b\in {\rm ob}(C)$. 
If $\gamma$ fulfills {\rm (P)} at $a$ and $\delta$ fulfills {\rm (P)} at $\gamma(a), \gamma(b)$, then $\delta\circ\gamma$ fulfills {\rm (P)} at $a, b$.  
\end{theorem}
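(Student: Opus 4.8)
The plan is to run the standard two-step amalgamation, fixing $r\in{\mathbb N}$ once and for all and applying each hypothesis with this same $r$. First I would use that $\delta$ fulfills (P) at $\gamma a,\gamma b$ to obtain an object $d\in{\rm ob}(D)$ witnessing this for $r$ colors. Next I would push $d$ back into $C$ using frankness of $\gamma$, but applied \emph{at the source object $b$}: this yields $c'\in{\rm ob}(C)$ with $\gamma c'=d$ and, crucially, $\gamma\big(\hom(b,c')\big)=\hom(\gamma b,d)$. Finally I would invoke that $\gamma$ fulfills (P) at $a$ (hence at the pair $a,c'$) to get an object $c\in{\rm ob}(C)$ witnessing (P) for $\gamma$ at $a,c'$ with $r$ colors. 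The claim is that this $c$ witnesses (P) for $\delta\circ\gamma$ at $a,b$.

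To verify the claim, take any $r$-coloring $\chi$ of $\hom(a,c)$. Applying the choice of $c$ produces $g'\in\hom(c',c)$ such that $\gamma q_1=\gamma q_2$ implies $\chi(g'\cdot q_1)=\chi(g'\cdot q_2)$ for $q_1,q_2\in\hom(a,c')$. I would use $g'$ to transport $\chi$ up to an $r$-coloring $\psi$ of $\hom(\gamma a,d)$, setting $\psi(\gamma q)=\chi(g'\cdot q)$ whenever the argument lies in $\gamma\big(\hom(a,c')\big)$, and assigning an arbitrary fixed color elsewhere; the displayed implication for $g'$ is exactly what makes this prescription well defined on the image. Feeding $\psi$ into the choice of $d$ yields $h\in\hom(\gamma b,d)$ with $\delta p_1=\delta p_2\Rightarrow\psi(h\cdot p_1)=\psi(h\cdot p_2)$ for $p_1,p_2\in\hom(\gamma a,\gamma b)$. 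Because $h\in\hom(\gamma b,d)=\gamma\big(\hom(b,c')\big)$ by the frank lifting, I can choose $\tilde h\in\hom(b,c')$ with $\gamma\tilde h=h$ and set $g=g'\cdot\tilde h\in\hom(b,c)$.

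It then remains to check $g$ works. Given $f_1,f_2\in\hom(a,b)$ with $(\delta\circ\gamma)f_1=(\delta\circ\gamma)f_2$, put $p_i=\gamma f_i\in\hom(\gamma a,\gamma b)$, so $\delta p_1=\delta p_2$ and hence $\psi(h\cdot p_1)=\psi(h\cdot p_2)$. Functoriality gives $h\cdot p_i=\gamma\tilde h\cdot\gamma f_i=\gamma(\tilde h\cdot f_i)$ with $\tilde h\cdot f_i\in\hom(a,c')$, so by the definition of $\psi$ and well-definedness, $\psi(h\cdot p_i)=\chi\big(g'\cdot(\tilde h\cdot f_i)\big)=\chi(g\cdot f_i)$; combining the two equalities yields $\chi(g\cdot f_1)=\chi(g\cdot f_2)$, as required.

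I expect the one genuinely delicate point to be the handling of the auxiliary coloring $\psi$: it must simultaneously be well defined (so that $\gamma$'s conclusion at $a,c'$ can be encoded into it) and be compatible with a morphism $h$ that can be pulled back into $C$. The key observations that make this go through are, first, that frankness should be invoked at the source object $b$ rather than $a$ precisely so that the witness $h$ lifts to $\tilde h$, and second, that one does \emph{not} need $\gamma$ to surject onto all of $\hom(\gamma a,d)$: the coloring $\psi$ only needs to be consistent on $\gamma\big(\hom(a,c')\big)$ and may be extended arbitrarily off this image, since the only points of $\psi$ actually queried, namely $h\cdot\gamma f_i=\gamma(\tilde h\cdot f_i)$, automatically lie in that image.
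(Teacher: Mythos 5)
Your proof is correct and follows essentially the same route as the paper's: you choose $d$ from (P) for $\delta$ at $\gamma a,\gamma b$, lift it via frankness at $b$ to $c'$ so that $\hom(\gamma b,d)=\gamma\big(\hom(b,c')\big)$, obtain $c$ from (P) for $\gamma$ at $a,c'$, and then transport the coloring through $g'$ and $\gamma$ to a well-defined coloring of $\hom(\gamma a,d)$, exactly as the paper does with its auxiliary coloring $\bar\chi$. The only difference is presentational: the paper isolates the frankness-plus-$\delta$ step as a separate Claim, whereas you inline it into a single pass.
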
 

\begin{proof} We write $\delta\gamma$ for $\delta\circ\gamma$. 

Fix $a, b\in {\rm ob}(C)$ with the aim to show that $\delta\gamma$ has (P) at $a,b$. In order to do this, let $r\in {\mathbb N}$. The objects 
$a,b$ and the natural number $r$ will remain fixed for the rest of this proof. 

\begin{claim} There is
$c\in {\rm ob}(C)$ such that for every $r$-coloring $\chi$ of $\hom({\gamma}a,{\gamma} c)$, 
there is $g\in \hom(b,c)$ such that for $f_1, f_2\in \hom(a,b)$
\begin{equation}\label{E:tro}
\delta {\gamma} f_1 = \delta {\gamma} f_2 \Longrightarrow \chi\bigl({\gamma} (g\scirc f_1)\bigr) = \chi\bigl({\gamma} (g\scirc f_2)\bigr).
\end{equation}
\end{claim}

\noindent{\em Proof of Claim.} 
Since $\delta$ fulfills (P) at $\gamma a$ and $\gamma b$, we can find $d\in {\rm ob}(D)$ such that 
for every $r$-coloring $\chi$ of ${\rm hom}(\gamma a, d)$, 
there is $g'\in \hom(\gamma b,d)$ such that, for $h_1,h_2\in \hom(\gamma a, \gamma b)$,  
we have 
\begin{equation}\label{E:cra}
\delta h_1 = \delta h_2 \Longrightarrow \chi\bigl(g'\scirc h_1\bigr) = \chi\bigl( g'\scirc h_2\bigr).
\end{equation}
By frankness of the functor $\gamma$, there is $c\in {\rm ob}(C)$ such that $d=\gamma c$ and, 
for every $g'\in \hom(\gamma b,d)$, there is $g\in \hom(b, c)$ with $g'=\gamma g$. 
We claim that this $c$ makes the conclusion of the claim true. 

Let $\chi$ be a $r$-coloring of $\hom({\gamma}a, {\gamma} c)= \hom({\gamma} a, d)$. 
By our choice of $d$, there
is $g'\in \hom(\gamma b, d)$ such that, for $h_1,h_2\in \hom(\gamma a, \gamma b)$, condition \eqref{E:cra} holds. 
Let $g\in \hom(b, c)$ be such that $g'=\gamma g$. In order to check condition \eqref{E:tro}, 
fix $f_1,f_2\in \hom(a,b)$ with 
\begin{equation}\label{E:astr}
\delta {\gamma} f_1 = \delta {\gamma} f_2. 
\end{equation} 
Note that $\gamma f_1,\, \gamma f_2\in \hom(\gamma a, \gamma b)$, and therefore 
by \eqref{E:cra} and \eqref{E:astr}, we have 
\[
\chi\bigl(g'\scirc (\gamma f_1)\bigr) = \chi\bigl(g'\scirc (\gamma f_2)\bigr).
\]
Since 
\[
g'\scirc (\gamma f_1) = (\gamma g)\scirc (\gamma f_1) = \gamma(g\scirc f_1)\;\hbox{ and }\; 
g'\scirc ( \gamma f_2) = (\gamma g)\scirc (\gamma f_2)=\gamma(g\scirc f_2), 
\]
it follows that $\chi\bigl({\gamma}(g\scirc f_1)\bigr) = \chi\bigl({\gamma}(g\scirc f_2)\bigr)$, which gives \eqref{E:tro} and the claim.

\smallskip
Now, we prove the conclusion of the theorem from the claim. 
We are seeking $c\in {\rm ob}(C)$ with the following property: for each $r$-coloring $\chi$ of $\hom(a,c)$ 
there exists $g\in \hom(b,c)$ such that, for $f_1, f_2\in \hom(a,b)$,
\begin{equation}\label{E:alth} 
\delta {\gamma} f_1 = \delta {\gamma} f_2 \Longrightarrow \chi(g\scirc f_1)= \chi(g\scirc f_2).
\end{equation}
We apply the claim to obtaining $c'\in {\rm ob}(C)$. Next, recall that we assume that $\gamma$ fulfills (P) at $a$, so it fulfills (P) at $a, c'$, which, for the given $r$, yields 
$c\in {\rm ob}(C)$. 
We claim that this $c$ works.

Let $\chi$ be a $r$-coloring of $\hom(a, c)$. By the choice of $c$ there
exists $g'\in \hom(c',c)$ such that for $f_1, f_2\in \hom(a,b)$ and $h_1, h_2\in \hom(b,c')$,
\begin{equation}\label{E:mirac}
{\gamma} (h_1\scirc f_1) = {\gamma} (h_2\scirc f_2)
\Longrightarrow \chi\bigl(g'\scirc (h_1\scirc f_1)\bigr) = \chi\bigl(g'\scirc (h_2\scirc f_2)\bigr).
\end{equation}
We define a $r$-coloring ${\bar \chi}$ on $\hom({\gamma} a, {\gamma} c')$. First we specify $\bar\chi$ of the subset 
\[
\{ \gamma(h\cdot f)\mid  f\in {\rm hom}(a,b),\, h\in {\rm hom}(b,c')\}
\]
of $\hom({\gamma} a, {\gamma} c')$. So, for $f\in \hom(a,b)$ and $h\in \hom(b, c')$, let 
\begin{equation}\label{E:mircont}
{\bar \chi}\bigl({\gamma} (h\cdot f)\bigr) = \chi\bigl(g'\scirc h\scirc f\bigr).
\end{equation}
The function ${\bar \chi}$ is well-defined by \eqref{E:mirac}. Now we extend $\bar\chi$ to an $r$-coloring of 
the whole set $\hom({\gamma} a, {\gamma} c')$
in an arbitrary way. We denote this extension again by $\bar\chi$. By our
choice of $c'$ from Claim, there exists $g''\in \hom(b,c')$ such that, for $f_1, f_2\in \hom(a,b)$, 
\begin{equation}\label{E:miren}
\delta {\gamma} f_1 = \delta {\gamma} f_2 \Longrightarrow
{\bar \chi}\bigl({\gamma}(g''\scirc f_1)\bigr) = {\bar \chi}\bigl({\gamma}(g''\scirc f_2)\bigr).
\end{equation}
Combining \eqref{E:miren} with \eqref{E:mircont}, we see that, for $f_1, f_2\in \hom(a,b)$, 
\begin{equation}\notag
\delta {\gamma} f_1 = \delta {\gamma} f_2 \Longrightarrow \chi\bigl((g'\scirc g'')\scirc f_1\bigr)
= \chi\bigl((g'\scirc g'')\scirc f_2\bigr).
\end{equation}
Thus, $g= g'\scirc g''\in \hom(b,c)$ 
is as required by \eqref{E:alth}. 
\end{proof}

The following corollary improves the estimate from Proposition~\ref{P:bas}. 
For a family $\Delta$ of endofunctors of a category $C$, by $\langle \Delta\rangle$ we denote the semigroup generated by $\Delta$ 
using composition, that is,  
\[
\langle \Delta\rangle = \{ \overline{\delta} \mid \overline{\delta} =\delta_1\circ\cdots \circ \delta_n,\hbox{ for }\delta_1, \dots , \delta_n\in \Delta\}. 
\]

\begin{corollary}\label{C:pest} 
Let $\Delta$ be a family of endofunctors of $C$ with each endofunctor in $\Delta$ being frank and fulfilling {\rm (P)}. 
\begin{enumerate}
\item[(i)] For each $a, b\in {\rm ob}(C)$, we have 
\[
{\rm rd}(a,b) \leq \min \{ \big|\overline{\delta}\big({\rm hom}(a,b)\big)\big|\mid \overline{\delta}\in \langle \Delta\rangle\}. 
\]

\item[(ii)] For each $a\in {\rm ob}(C)$, 
\[
{\rm rd}(a) \leq \sup_{b\in {\rm ob}(C)} \min \{ \big|\overline{\delta}\big({\rm hom}(a,b)\big)\big|\mid \overline{\delta}\in \langle \Delta\rangle\}. 
\]

\end{enumerate}
\end{corollary}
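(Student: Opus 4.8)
The plan is to derive Corollary~\ref{C:pest} by combining the composition theorem (Theorem~\ref{T:semig}) with the basic bound (Proposition~\ref{P:bas}). The key observation is that each $\overline{\delta}\in\langle\Delta\rangle$ is itself a frank functor fulfilling (P), so that the family $\langle\Delta\rangle$ satisfies the hypotheses of Proposition~\ref{P:bas}, and then (i) and (ii) follow at once by applying that proposition to $\langle\Delta\rangle$ in place of $\Delta$.

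First I would establish the central lemma: if $\overline{\delta}=\delta_1\circ\cdots\circ\delta_n$ with each $\delta_i\in\Delta$, then $\overline{\delta}$ is frank and fulfills (P). Frankness is immediate from Lemma~\ref{L:trz}(ii), which states that a composition of frank functors is frank. For property (P) I would argue by induction on $n$. The base case $n=1$ is the hypothesis. For the inductive step, write $\overline{\delta}=\delta_n\circ\overline{\delta}'$ where $\overline{\delta}'=\delta_1\circ\cdots\circ\delta_{n-1}$; since all $\delta_i$ are endofunctors of $C$, so is $\overline{\delta}'$. To apply Theorem~\ref{T:semig} (with $\gamma=\overline{\delta}'$, $\delta=\delta_n$, and $C=D=E$), I need $\overline{\delta}'$ to be frank, which holds by Lemma~\ref{L:trz}(ii), to fulfill (P) at $a$, which holds by the inductive hypothesis (applied at every object, since $\overline{\delta}'$ fulfills (P)), and I need $\delta_n$ to fulfill (P) at $\overline{\delta}'(a),\overline{\delta}'(b)$, which holds because $\delta_n$ fulfills (P). The conclusion of Theorem~\ref{T:semig} is precisely that $\overline{\delta}=\delta_n\circ\overline{\delta}'$ fulfills (P) at $a,b$; since $a,b$ are arbitrary, $\overline{\delta}$ fulfills (P).

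With the lemma in hand, both parts are routine. For (i), apply Proposition~\ref{P:bas}(i) to the family $\langle\Delta\rangle$: each member fulfills (P) at $a,b$, so
\[
{\rm rd}(a,b)\leq \min_{\overline{\delta}\in\langle\Delta\rangle}\big|\overline{\delta}\big({\rm hom}(a,b)\big)\big|,
\]
which is exactly the stated bound. For (ii), apply Proposition~\ref{P:bas}(ii) likewise, obtaining the displayed supremum-of-minima estimate. Alternatively, (ii) follows from (i) together with the definition \eqref{E:rdo} of ${\rm rd}(a)$ as the supremum over $b$, exactly as in the proof of Proposition~\ref{P:bas}.

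The main obstacle, and the only genuine content, is the inductive verification that $\overline{\delta}$ fulfills (P); everything else is bookkeeping. The delicate point in the induction is matching the frankness hypothesis of Theorem~\ref{T:semig}, which requires the \emph{first} functor applied (here $\overline{\delta}'$) to be frank and to fulfill (P) at $a$ in its full strength (at $a,b'$ for all $b'$, not merely at the fixed pair $a,b$), while the outer functor $\delta_n$ need only fulfill (P) at the shifted pair $\overline{\delta}'(a),\overline{\delta}'(b)$. Writing the composition as $\delta_n\circ\overline{\delta}'$ rather than $\overline{\delta}'\circ\delta_n$ is therefore essential, and one must take care that the inductive hypothesis delivers (P) for $\overline{\delta}'$ at all pairs so that the required strengthened form at $a$ is available.
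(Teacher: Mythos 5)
Your proposal is correct and follows essentially the same route as the paper: frankness of composites via Lemma~\ref{L:trz}(ii), property (P) for each $\overline{\delta}\in\langle\Delta\rangle$ via Theorem~\ref{T:semig} (the paper leaves this induction implicit, you spell it out), and then Proposition~\ref{P:bas} applied to $\langle\Delta\rangle$. The only nitpick is notational: writing the outermost factor as $\delta_n$ in $\overline{\delta}=\delta_n\circ\overline{\delta}'$ with $\overline{\delta}'=\delta_1\circ\cdots\circ\delta_{n-1}$ clashes with the usual convention in which $\delta_1$ is outermost, but since the factors are arbitrary elements of $\Delta$ this is a harmless relabeling and the induction covers all of $\langle\Delta\rangle$.
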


\begin{proof} It is enough to check (i) as (ii) follows from (i) immediately. By Lemma~\ref{L:trz}~(ii), every endofunctor in $\langle\Delta\rangle$ is frank. 
By Theorem~\ref{T:semig}, each endofunctor in $\langle \Delta\rangle$ 
fulfills (P) at $a, b$. The conclusion follows from Proposition~\ref{P:bas}(i). 
\end{proof}

\subsection{Products}

We define the finitely supported product of categories in a natural way. Let $C_i$, $i\in I$, be a family of categories. 
Define 
\[
\bigotimes_{I} C_i 
\]
as follows. Objects of this category are of the form 
\[
(c_i)_{i\in K}, 
\]
where $K\subseteq I$ is finite and $c_i\in {\rm ob}(C_i)$. Morphisms are of the form 
\[
(f_i)_{i\in K},
\]
where $K\subseteq I$ is finite and $f_i\in {\rm hom}(c_i, d_i)$, for $c_i, d_i\in {\rm ob}(C_i)$, and we declare the above morphism to be a morphism 
from $(c_i)_{i\in K}$ and $(d_i)_{i\in K}$. To relax the notation, we will write 
\[
(c_i)_{K}\;\hbox{ and }\; (f_i)_{K}
\]
for the object $(c_i)_{i\in K}$ and the morphism $(f_i)_{i\in K}$, respectively.

Assume now we have two families of categories $C_i$ and $D_i$ with $i\in I$. Let $\delta_i\colon C_i\to D_i$ be a functor. 
Define $\otimes_{I} \delta_i \colon \bigotimes_{I} C_i \to \bigotimes_{I} D_i$
by letting 
\[
(\otimes_{I}\delta_i)\big((c_i)_{K}\big)= (\delta_i(c_i))_{K}\;\hbox{ and }\; (\otimes_{I} \delta_i)\big((f_i)_{K}\big) = (\delta_i(f_i))_{K}. 
\]
It is immediate that $\otimes_{i\in I} \delta_i$ is a functor. If $C_i=C$ and $\delta_i=\delta$ for all $i\in I$, we write $\bigotimes_IC$ and $\otimes_I\delta$ 
for $\bigotimes_IC_i$ and $\otimes_I\delta_i$, respectively.

The following lemma is easy to check and we leave doing it to the reader. 

\begin{lemma}\label{L:prfr} 
If each functor $\delta_i\colon C_i\to D_i$, $i\in I$ is frank, 
then $\otimes_{I}\delta_i\colon \bigotimes_{I} C_i\to \bigotimes_{I} D_i$ is frank. 
\end{lemma}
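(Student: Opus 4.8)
The plan is to unwind the definition of frankness for the product functor $\otimes_I\delta_i$ and reduce it, coordinate by coordinate, to the assumed frankness of each $\delta_i$. Recall that frankness requires: given any object $a$ of the source category and any object $b'$ of the target category, we must produce an object $b$ of the source with $(\otimes_I\delta_i)(b)=b'$ and $(\otimes_I\delta_i)\big(\hom(a,b)\big)=\hom\big((\otimes_I\delta_i)(a),(\otimes_I\delta_i)(b)\big)$. So I would fix an object $a=(a_i)_{K}$ in $\bigotimes_I C_i$ and an object $b'=(b'_i)_{L}$ in $\bigotimes_I D_i$, where $K,L\subseteq I$ are finite.

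The first substantive step is to handle the bookkeeping of supports. In the product category, a morphism from $(a_i)_K$ to $(b_i)_L$ can exist only when $K=L$, since a morphism is indexed by a single finite set serving as the common support of source and target. Thus if $K\neq L$ both hom-sets in question are empty and the frankness equation reads $\emptyset=\emptyset$, which holds trivially once we exhibit any source object $b$ over $b'$; and since $\delta_i$ is frank (in particular surjective on objects by the object part of its definition), such a $b$ with $(\otimes_I\delta_i)(b)=b'$ exists. So I would dispose of the $K\neq L$ case quickly and then assume $K=L$.

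With $K=L$ fixed, for each $i\in K$ apply frankness of $\delta_i$ to the pair $a_i\in{\rm ob}(C_i)$ and $b'_i\in{\rm ob}(D_i)$ to obtain $b_i\in{\rm ob}(C_i)$ with $\delta_i(b_i)=b'_i$ and $\delta_i\big(\hom(a_i,b_i)\big)=\hom(\delta_i a_i,\delta_i b_i)$. Setting $b=(b_i)_K$ gives $(\otimes_I\delta_i)(b)=(b'_i)_K=b'$, so the object equation holds. For the hom-set equation, I would observe that a morphism in $\hom\big((a_i)_K,(b_i)_K\big)$ is exactly a tuple $(f_i)_K$ with $f_i\in\hom(a_i,b_i)$, so the product hom-set is the Cartesian product of the coordinate hom-sets, and similarly on the target side $\hom\big((\delta_i a_i)_K,(\delta_i b_i)_K\big)$ is the product of the $\hom(\delta_i a_i,\delta_i b_i)$. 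Since $\otimes_I\delta_i$ acts coordinatewise, its image on the product hom-set is the product of the images $\delta_i\big(\hom(a_i,b_i)\big)$, and by coordinatewise frankness each factor equals $\hom(\delta_i a_i,\delta_i b_i)$. A product of factors is the full product iff each factor is full, so the image is all of the target hom-set.

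This argument is entirely mechanical, which matches the paper's remark that the lemma is easy; there is no real obstacle, only the mild subtlety of getting the support condition right. The one point to state carefully is that surjectivity of a product map onto a product set reduces to coordinatewise surjectivity precisely because the product hom-sets factor as Cartesian products and the functor is applied independently in each coordinate; this is where the empty-support edge cases (including the possibility $K=\emptyset$, where both hom-sets are singletons containing the empty tuple) should be checked to hold vacuously.
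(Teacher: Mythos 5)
Your proof is correct, and it supplies exactly the routine verification the paper omits (the paper states this lemma with ``easy to check and we leave doing it to the reader''): coordinatewise application of frankness of each $\delta_i$, together with the observation that morphisms in $\bigotimes_I C_i$ exist only between objects of equal support, so the hom-sets factor as Cartesian products and surjectivity reduces to surjectivity in each coordinate. The only point worth noting is that your $K\neq L$ case silently uses that frankness of $\delta_i$ yields surjectivity on objects, which requires ${\rm ob}(C_i)\neq\emptyset$; this is harmless under the paper's implicit standing assumptions.
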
 

The following theorem gives the transfer of property (P) from the factors to the product of categories. 

\begin{theorem}\label{T:pro}
Let $\delta_i\colon C_i\to D_i$, $i\in I$ be frank functors. 
Let $K\subseteq I$ be finite, and 
assume that ${\rm hom}(a,b)$ is finite for all $a,b\in {\rm ob}(C_i)$ with $i\in K$. 
If $\delta_i$ fulfills {\rm (P)} at $a_i\in {\rm ob}(C_i)$, for each $i\in K$, then 
$\otimes_{I}\delta_i$ fulfills {\rm (P)} at $(a_i)_{K}$. 
\end{theorem}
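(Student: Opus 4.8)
The plan is to reduce to a product of two categories and then induct on $|K|$. First note that in $\bigotimes_I C_i$ every morphism out of $(a_i)_K$ has target of support exactly $K$, so $\hom\bigl((a_i)_K,b\bigr)=\emptyset$ unless $b=(b_i)_K$; for targets $b$ of support $\neq K$ condition (P) holds trivially (take $c=b$ and $g$ the identity). Hence I may restrict to targets $(b_i)_K$, where $\hom\bigl((a_i)_K,(b_i)_K\bigr)=\prod_{i\in K}\hom(a_i,b_i)$ and, since $\otimes_I\delta_i$ acts coordinatewise, $(\otimes_I\delta_i)\big((f_i)_K\big)=(\otimes_I\delta_i)\big((f_i')_K\big)$ holds iff $\delta_i f_i=\delta_i f_i'$ for all $i\in K$. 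Equivalently I work inside $\bigotimes_{i\in K}C_i$ with the functor $\otimes_{i\in K}\delta_i$, where the finiteness hypothesis makes every hom-set finite. The cases $|K|\le 1$ are immediate, and for the inductive step I split off one index $i_0$, put $K'=K\setminus\{i_0\}$, and use $\bigotimes_{i\in K}C_i\cong\big(\bigotimes_{i\in K'}C_i\big)\otimes C_{i_0}$ together with the inductive hypothesis, which gives (P) for $\otimes_{i\in K'}\delta_i$ at $(a_i)_{K'}$. Frankness is among the standing hypotheses, and by Lemma~\ref{L:prfr} it is inherited by the partial products $\otimes_{i\in K'}\delta_i$, but the coloring argument below uses only (P) for the factors and the finiteness of hom-sets.

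The engine is thus the two-factor statement: if $\gamma\colon C\to D$ and $\delta\colon C'\to D'$ have finite hom-sets and fulfill (P) at $a$ and at $a'$, then $\gamma\otimes\delta$ fulfills (P) at $(a,a')$. To prove it I fix a target $(b,b')$ and $r\in\mathbb N$. The decisive device is to inflate the palette by the size of the opposite factor's hom-set. I would first set $t=|\hom(a',b')|$ and apply (P) for $\gamma$ at $a,b$ with $r^t$ colors to obtain $c\in{\rm ob}(C)$; then, with $c$ now fixed, set $m=|\hom(a,c)|$ and apply (P) for $\delta$ at $a',b'$ with $r^m$ colors to obtain $c'\in{\rm ob}(C')$. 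Both exponents are finite precisely because of the finiteness hypothesis, and the order matters: $t$ is available from the start, whereas $m$ becomes meaningful only once $c$ has been produced.

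I claim $(c,c')$ witnesses (P). Given an $r$-coloring $\chi$ of $\hom(a,c)\times\hom(a',c')$, I would stabilize the two coordinates in turn. First, the $r^m$-coloring $q\mapsto\bigl(p\mapsto\chi(p,q)\bigr)$ of $\hom(a',c')$ produces, by the choice of $c'$, a morphism $g'\in\hom(b',c')$ such that $\delta f_1'=\delta f_2'$ implies $\chi(p,g'\cdot f_1')=\chi(p,g'\cdot f_2')$ for every $p\in\hom(a,c)$. Second, the $r^t$-coloring $p\mapsto\bigl(f'\mapsto\chi(p,g'\cdot f')\bigr)$ of $\hom(a,c)$ produces, by the choice of $c$, a morphism $g\in\hom(b,c)$ such that $\gamma f_1=\gamma f_2$ implies $\chi(g\cdot f_1,g'\cdot f')=\chi(g\cdot f_2,g'\cdot f')$ for every $f'\in\hom(a',b')$.

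It then remains to chain the two stabilizations. If $(f_1,f_1'),(f_2,f_2')\in\hom(a,b)\times\hom(a',b')$ satisfy $\gamma f_1=\gamma f_2$ and $\delta f_1'=\delta f_2'$, then the second stabilization with $f'=f_1'$ gives $\chi(g\cdot f_1,g'\cdot f_1')=\chi(g\cdot f_2,g'\cdot f_1')$, and the first stabilization with $p=g\cdot f_2$ gives $\chi(g\cdot f_2,g'\cdot f_1')=\chi(g\cdot f_2,g'\cdot f_2')$; together these yield $\chi\bigl((g,g')\cdot(f_1,f_1')\bigr)=\chi\bigl((g,g')\cdot(f_2,f_2')\bigr)$, which is exactly (P) at $(a,a'),(b,b')$. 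The main obstacle is conceptual rather than computational: one must recognize that the entire ``profile'' of $\chi$ along one finite hom-set can be encoded as a single color, so that one application of (P) for that factor simultaneously controls all the slices generated by the other factor, and one must order the two applications so that the needed exponents $t,m$ are finite and available. This is the only place where finiteness of hom-sets is indispensable.
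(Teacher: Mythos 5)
Your proof is correct, and it takes a genuinely different route from the paper's. The paper factors $\otimes_I\delta_i$ as a composition $\widehat{\delta^k}\circ\cdots\circ\widehat{\delta^0}$ of functors, each applying $\delta_{i_p}$ in a single coordinate and the identity elsewhere, passing through mixed categories $E^p_i$ in which the already-treated coordinates live in $D_i$; the steps are then chained via the composition theorem (Theorem~\ref{T:semig}), and each single step is the same palette-inflation device you use (the coloring \eqref{E:dech}, whose colors are tuples indexed by the frozen coordinates' hom-sets). That scaffolding is exactly where the paper needs frankness, twice: Theorem~\ref{T:semig} demands frank inner functors, and finiteness of the exponent $M$ concerns hom-sets in the target categories $D_i$, which is deduced from finiteness in $C_i$ via Lemma~\ref{L:trz}(iii). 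Your argument never leaves the source categories: you induct on $|K|$ with a self-contained two-factor engine, apply (P) once per factor in the correct order (choosing $c$ first so that $m=|\hom(a,c)|$ is defined and finite), and chain the two stabilizations by hand rather than citing Theorem~\ref{T:semig}. Consequently you use neither frankness nor the composition theorem, so you in fact prove a slightly stronger statement in which the frankness hypothesis is deleted; what you pay is redoing explicitly, in your final computation, the chaining that Theorem~\ref{T:semig} packages once and for all (and which the paper can then reuse elsewhere). One cosmetic point: the isomorphism $\bigotimes_{i\in K}C_i\cong\bigl(\bigotimes_{i\in K'}C_i\bigr)\otimes C_{i_0}$ is not literally true because of objects of partial support, but your opening reduction shows that only full-support objects matter, where the identification is valid; the paper elides the same bookkeeping when it dismisses the case $K'\neq K$.
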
 

\begin{proof} 
Fix an enumeration of $K$, that is, 
\[
K= \{ i_j\mid j=1, \dots, k\}.
\]
We define the following families of categories 
\[
E^0_i=C_i,\hbox{ for all }i\in I,
\]
and, for $p=1, \dots, k+1$,
\[
E^p_i=
\begin{cases}
D_i, &i\in I\setminus K;\\
D_i, &i= i_j, \hbox{ for some } j<p;\\
C_i, &i=i_j, \hbox{ for some } j\geq p.
\end{cases}
\]
In particular, $E^1_i= D_i$ for all $i\in I\setminus K$, and $E^1_i=C_i$ for all $i\in K$, and $E^{k+1}_i=D_i$ for all $i\in I$. 
Similarly, define $\delta^p_i\colon E^p_i\to E^{p+1}_i$, for $i\in I$ and $p=0, \dots, k$, by letting 
\[
\delta^0_i=
\begin{cases}
\delta_i, &i\in I\setminus K;\\
{\rm id}_{C_i}, &i\in K;
\end{cases}
\]
and, for $p\geq 1$, 
\[
\delta^p_i = 
\begin{cases}
\delta_i, &i = i_p;\\
{\rm id}_{D_i}, & i\in I\setminus K\hbox{ or } i= i_j\hbox{ for some } j< p;\\
{\rm id}_{C_i}, &  i= i_j\hbox{ for some } j> p
\end{cases} 
\]
For $p=0, \dots, k$, consider the functor 
\[
\otimes_I \delta^p_i: \bigotimes_I E^p_i \to \bigotimes_I E^{p+1}_i.
\]
Set $\widehat{\delta^p} = \otimes_I \delta^p_i$, 
and note that, by Lemma~\ref{L:prfr}, each $\widehat{\delta^p}$ is frank and that 
\[
\otimes_I \delta_i = \widehat{\delta^k} \circ \cdots \circ \widehat{\delta^0}.
\]

Fix $a_i\in {\rm ob}(C_i)$, for $i\in K$. 
By Theorem~\ref{T:semig}, it suffices to show that, for each $p\leq k$,  
$\widehat{\delta^p}$ fulfills (P) at 
\begin{equation}\label{E:prob} 
\big( \widehat{\delta^{p-1}} \circ \cdots \circ \widehat{\delta^0}\big) \big( (a_i)_{K}\big)
\;\hbox{ and }\; (b'_i)_{K'},
\end{equation} 
where $(b'_i)_{K'}$ is an arbitrary object of ${\rm ob}( \bigotimes_I E^p_i)$. 
This is clear for $p=0$ and, for arbitrary $p$, when $K'\not= K$ as in this case there are no morphisms between the two objects in \eqref{E:prob}. 
Now consider the case $p\geq 1$ and $K=K'$. 
Set 
\[
(a_i')_K= \big( \widehat{\delta^{p-1}} \circ \cdots \circ \widehat{\delta^0}\big) \big( (a_i)_{K}\big),
\] 
and note that 
\[
a'_{i}= 
\begin{cases}
a_{i},& \text{ if } i=i_j\text{ for }j\geq p;\\
\delta_{i}(a_{i}),& \text{ if } i=i_j\text{ for } j<p; 
\end{cases}
\]
Let  
\[
R= r^M, 
\]
where 
\[
M= \prod_{i\in K, i\not=i_p} |{\rm hom}\big(a'_{i}, b'_{i}\big)|.
\]
By our assumption of finiteness of ${\rm hom}(a, b)$ for $a,b \in {\rm ob}(C_i)$ and frankness of $\delta_i$ in conjunction with Lemma~\ref{L:trz}~(iii), for all $i\in K$, 
we see that $M$, and so also $R$, is finite. 
Find $c\in C_{i_p}$ that witnesses property (P) for the functor $\delta_{i_p}$ at $a_{i_p}$, $b_{i_p}$ with $R$ colors. 

We claim that the object $(c_i)_{K}$ of $\bigotimes_{I} C_i$ with 
\[
c_{i}= 
\begin{cases}
b'_{i},& \text{ if } i\not=i_p;\\
c,& \text{ if }i=i_p;\\
\end{cases}
\]
witnesses property (P) for the functor $\widehat{\delta^{p}}$ at 
$(a'_i)_{K}$ and $(b'_i)_{K}$ with $r$ colors. Indeed, let $\chi$ be an $r$-coloring of ${\rm hom}( (a'_i)_{K}, (c_i)_{K})$. Define an $R$-coloring $\chi'$ of 
${\rm hom}(a_{i_p}, c)$ by letting $\chi'(h)$ be the sequence 
\begin{equation}\label{E:dech}
\big(\chi((f_i)_{K})\mid\; f_{i_p}=h,\; 
f_i\in {\rm hom}(a'_i, b'_i) \hbox{ for }i\not= i_p\big). 
\end{equation}
By our choice of $c$, there is $g\in {\rm hom}(b_{i_p}, c)$ such that for $f,f'\in {\rm hom}(a_{i_p}, b_{i_p})$ we have 
\begin{equation}\label{E:down}
\delta_{i_p}(f)=\delta_{i_p}(f')\Longrightarrow \chi'(g\cdot f) = \chi'(g\cdot f'). 
\end{equation}
Let $(g_i)_{K} \in {\rm hom}((b'_i)_{K}, (c_i)_{K})$ be such that $g_{i_p}= g$ and, for $i\in K,\, i\not= i_p$,  $g_i$ is the identity in 
${\rm hom}(b'_i,c_i)= {\rm hom}(b'_i, b'_i)$. 
It is now easy to check, from \eqref{E:dech} and \eqref{E:down}, that 
for $(f_i)_{K},\, (f'_i)_{K} \in \hom\big((a_i')_K, (b_i')_K\big)$, 
\begin{equation}\notag
\widehat{\delta^p}\big((f_i)_{K}\big) = \widehat{\delta^p}\big((f'_i)_{K}\big) \Longrightarrow 
\chi\big((g_i)_{K}\cdot (f_i)_{K}\big) = \chi\big((g_i)_{K}\cdot (f'_i)_{K}\big), 
\end{equation}
as required. 
(iii) It follows from Theorem~\ref{T:semig} that 
\end{proof}

We give now one concrete application of Theorem~\ref{T:pro}. 
Recall the category $P$  and the functor $\partial_P$ from Section~\ref{Su:exp}. From Lemma~\ref{L:pfr} and Theorem~\ref{T:pro}, we immediately get. 

\begin{corollary}\label{C:prr}
The functor $\otimes_{\mathbb N}\partial_P\colon \bigotimes_{\mathbb N}P\to \bigotimes_{\mathbb N}P$ is frank and fulfills {\rm (P)}. 
\end{corollary}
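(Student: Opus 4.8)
The plan is to derive both conclusions directly from the facts already established for the single functor $\partial_P$ together with the product-transfer results, since the functor in question is simply the instance $I=\mathbb{N}$, $C_i=D_i=P$, $\delta_i=\partial_P$ of the product construction.

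For frankness, I would first invoke Lemma~\ref{L:pfr}(i), which gives that $\partial_P\colon P\to P$ is frank. As every factor functor equals $\partial_P$ and is therefore frank, Lemma~\ref{L:prfr} applies at once and yields that $\otimes_{\mathbb N}\partial_P\colon \bigotimes_{\mathbb N}P\to \bigotimes_{\mathbb N}P$ is frank. This settles the first assertion with no further work.

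For property {\rm (P)}, I would fix an arbitrary object $(a_i)_K$ of $\bigotimes_{\mathbb N}P$, where $K\subseteq\mathbb{N}$ is finite, and aim to verify that $\otimes_{\mathbb N}\partial_P$ fulfills {\rm (P)} at $(a_i)_K$; since this object is arbitrary, establishing this for every such object gives, by the stated convention, that $\otimes_{\mathbb N}\partial_P$ fulfills {\rm (P)}. To apply Theorem~\ref{T:pro} with $I=\mathbb{N}$ and this finite $K$, I would check its three hypotheses: each factor $\partial_P$ is frank (Lemma~\ref{L:pfr}(i)); ${\rm hom}(a,b)$ is finite for all objects $a,b$ of $P$; and each $\partial_P$ fulfills {\rm (P)} at $a_i$ for $i\in K$ (Lemma~\ref{L:pfr}(ii) gives {\rm (P)} at every object, in particular at $a_i$). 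The conclusion of Theorem~\ref{T:pro} is then precisely that $\otimes_{\mathbb N}\partial_P$ fulfills {\rm (P)} at $(a_i)_K$.

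The only hypothesis that is not a verbatim quotation is the finiteness of hom-sets in $P$, so that is the single point deserving the (small) effort, and it is the nearest thing to an obstacle here. By the definition of $P$, every morphism from $(k,i)$ to $(k',i')$ is a particular function $[k']\to[k]$; since $[k']$ and $[k]$ are finite sets, there are at most $k^{k'}$ such functions, whence ${\rm hom}\big((k,i),(k',i')\big)$ is finite. This verification is routine, and no genuine difficulty arises: the corollary is an immediate specialization of the general machinery of Section on propagating {\rm (P)}.
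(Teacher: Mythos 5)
Your proof is correct and takes essentially the same route as the paper, which derives the corollary immediately from Lemma~\ref{L:pfr} and Theorem~\ref{T:pro} (with Lemma~\ref{L:prfr} supplying frankness of the product). Your explicit check that hom-sets in $P$ are finite is a hypothesis of Theorem~\ref{T:pro} that the paper leaves implicit, and your verification of it is accurate.
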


\subsection{Modeling} 

We present a notion that allows property (P) to be transferred from one functor to another. We aim to give this notion its natural generality. 
Two related notions have already been proposed in the literature. One was the notion of interpretability that 
was defined by the author in \cite[Section~6.2]{Sol}, the other one was the notion of pre-adjunction defined by Ma{\v s}ulovi{\'c} in \cite{Mas}. 
Our notion of modeling generalizes both these definitions. 

Let $C$ and $D$ be categories. We define now the notion of cross-relatedness that 
will play an auxiliary, but important, role in the definition of modeling.

\begin{definition} 
Let $c_1, c_2, c_3\in {\rm ob}(C)$ and $d_1, d_2, d_3\in {\rm ob}(D)$. We say that 
$c_1, c_2, c_3$ and $d_1, d_2, d_3$ are {\bf cross-related} if there are functions
\[
\begin{split}
\phi&\colon {\rm hom}(c_1, c_2)\times {\rm hom}(d_2,d_3)\to {\rm hom}(d_1, d_2),\\
\psi&\colon {\rm hom}(d_2,d_3)\to {\rm hom}(c_2,c_3),\\
\zeta&\colon {\rm hom}(d_1, d_3)\to {\rm hom}(c_1, c_3) 
\end{split}
\]
such that, for $(f,g)\in {\rm hom}(c_1, c_2) \times {\rm hom}(d_2,d_3)$, 
\[
\zeta\big( g\cdot \phi(f,g)\big) = \psi(g)\cdot f. 
\]
\end{definition} 

Intuitively, one can see the notion of cross-relatedness as a way to define composition ``$g\cdot f$" of morphisms $f\in \hom(c_1,c_2)$ by morphisms $g\in \hom(d_2,d_3)$. Of course, literally, 
such composition does not exist as $\hom(c_1,c_2)$ is computed in $C$ while $\hom(d_2,d_3)$ in $D$. But it can be defined in a generalized sense, in fact, in two ways. 
In order to do it, one stipulates that there exist functions 
\[
\phi\colon {\rm hom}(c_1, c_2)\to {\rm hom}(d_1, d_2)\;\hbox{ and }\; 
\psi\colon {\rm hom}(d_2,d_3)\to {\rm hom}(c_2,c_3).
\]
that allow one to compute the composition ``$g\cdot f$", for $f\in \hom(c_1,c_2)$ and $g\in \hom(d_2,d_3)$, in two ways 
\[
g\cdot \phi(f)  \in \hom(d_1,d_3)\;\hbox{ and }\; 
\psi(g)\cdot f \in \hom(c_1,c_3). 
\]
To relate these two results one stipulates further that there is a function 
\[
\zeta\colon \hom(d_1,d_3)\to \hom(c_1,c_3)
\] 
such that 
\[
\zeta\big( g\cdot \phi(f) \big) = \psi(g)\cdot f.
\]
Being cross-related asserts that the above procedure can be implemented additionally allowing $\phi$ to depend on $g$.

Observe that one can formulate the definition of cross-relatedness without invoking $\zeta$, as, to ensure that such a function $\zeta$ exists, 
it suffices to assume that 
$g\cdot \phi(g,f)$ determines $\psi(g,f)\cdot f$, 
that is, that for all $(f,g), (f',g') \in {\rm hom}(c_1, c_2) \times {\rm hom}(d_2,d_3)$, 
\begin{equation}\label{E:subs}
g\cdot \phi(f,g) = g'\cdot \phi(f',g')\Longrightarrow \psi(g)\cdot f= \psi(g')\cdot f'.
\end{equation} 

When it is important to remember how cross-relatedness of the triples $c_1, c_2, c_3$ and $d_1, d_2, d_3$ is witnessed, we say 
that $c_1, c_2, c_3$ and $d_1, d_2, d_3$ are {\bf cross-related by $(\phi, \psi)$} omitting $\zeta$ from the notation for the reasons explained above.

The notion of cross-relatedness defined here is new; but in \cite{Sol}, a version of it with $\phi(f,g)$ depending only on $f$ is 
present implicitly; in \cite{Mas} another version of it is implicit, 
where $\phi$ and $\psi$ are defined globally as functions from $C$ to $D$ and from $D$ to $C$, 
respectively, and $\zeta$ is assumed to be equal to $\psi$. 

Let $\gamma\colon C\to E$ and $\delta \colon D\to F$ be functors, and let $d_1, d_2\in {\rm ob}(D)$ and $c_1, c_2\in {\rm ob}(C)$. 
\begin{definition} 
We say that $\gamma$ at $c_1, c_2$ is {\bf modeled by} $\delta$ at $d_1, d_2$ if,  
for each $d_3\in {\rm ob}(D)$, there is $c_3\in {\rm ob}(C)$ so that 
$c_1, c_2, c_3$ and $d_1, d_2, d_3$ are cross-related by some $(\phi, \psi)$ such that
\begin{equation}\label{E:crtr}
\gamma(f)=\gamma(f')\Longrightarrow \delta\big(\phi(f,g)\big)= \delta\big(\phi(f', g)\big),
\end{equation}
for all $f, f'\in {\rm hom}(c_1, c_2)$ and $g\in {\rm hom}(d_2, d_3)$. 
\end{definition}

Now, we have the main result of this section on transferring property (P) through modeling. 

\begin{theorem}\label{T:mod}
Let $\gamma$ be a functor with domain $C$. Let $a, b\in {\rm ob}(C)$. If $\gamma$ at $a, b$ is modeled by $\delta$ at $d_1,d_2$ with 
$\delta$ fulfilling {\rm (P)} at $d_1, d_2$, then $\gamma$ fulfills {\rm (P)} at $a, b$. 
\end{theorem}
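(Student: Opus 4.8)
The plan is to feed the pigeonhole witness through the three cross-relating functions $\phi,\psi,\zeta$: use $\zeta$ to transport the given coloring from $C$ into $D$, apply {\rm (P)} for $\delta$ there, and use $\psi$ to transport the resulting witness morphism back into $C$. Throughout I write $a=c_1$ and $b=c_2$ to match the notation in the definition of modeling, and I use the full form of cross-relatedness (the $\zeta$ suppressed in the notation ``$(\phi,\psi)$'' still exists).

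First I would fix $r\in{\mathbb N}$. Since $\delta$ fulfills {\rm (P)} at $d_1,d_2$, this $r$ yields an object $d_3\in{\rm ob}(D)$ carrying the pigeonhole property for $r$-colorings of $\hom(d_1,d_3)$. Feeding this particular $d_3$ into the hypothesis that $\gamma$ at $a,b$ is modeled by $\delta$ at $d_1,d_2$ produces an object $c_3\in{\rm ob}(C)$ together with witnessing functions $\phi,\psi,\zeta$ satisfying both the cross-relatedness identity $\zeta\big(g\cdot\phi(f,g)\big)=\psi(g)\cdot f$ and the modeling compatibility \eqref{E:crtr}. I claim that $c=c_3$ witnesses {\rm (P)} for $\gamma$ at $a,b$ with $r$ colors.

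To verify the claim, I would take an arbitrary $r$-coloring $\chi$ of $\hom(a,c_3)=\hom(c_1,c_3)$ and pull it back along $\zeta$ to an $r$-coloring $\chi'=\chi\circ\zeta$ of $\hom(d_1,d_3)$. Applying the pigeonhole property of $d_3$ to $\chi'$ yields $g'\in\hom(d_2,d_3)$ such that $\delta e_1=\delta e_2$ implies $\chi'(g'\cdot e_1)=\chi'(g'\cdot e_2)$ for all $e_1,e_2\in\hom(d_1,d_2)$. I would then set $g=\psi(g')\in\hom(c_2,c_3)=\hom(b,c_3)$ as the desired witness in $C$.

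The final step, which is the conceptual heart of the argument, is the computation showing that $\gamma f_1=\gamma f_2$ forces $\chi(g\cdot f_1)=\chi(g\cdot f_2)$ for $f_1,f_2\in\hom(a,b)$. Using the cross-relatedness identity at this fixed $g'$, each $g\cdot f_i=\psi(g')\cdot f_i$ equals $\zeta\big(g'\cdot\phi(f_i,g')\big)$, so $\chi(g\cdot f_i)=\chi'\big(g'\cdot\phi(f_i,g')\big)$. Now the modeling compatibility \eqref{E:crtr}, applied at the single morphism $g'$, converts $\gamma f_1=\gamma f_2$ into $\delta\big(\phi(f_1,g')\big)=\delta\big(\phi(f_2,g')\big)$; feeding $e_i=\phi(f_i,g')\in\hom(d_1,d_2)$ into the pigeonhole conclusion for $g'$ then gives $\chi'\big(g'\cdot\phi(f_1,g')\big)=\chi'\big(g'\cdot\phi(f_2,g')\big)$, which is exactly $\chi(g\cdot f_1)=\chi(g\cdot f_2)$. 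I expect the main obstacle to be bookkeeping rather than depth: one must order the choices so that $g'$ is fixed \emph{before} invoking \eqref{E:crtr}. This is precisely why the definition permits $\phi(f,g)$ to depend on $g$, since the compatibility condition is only ever applied at the one morphism $g'$ delivered by {\rm (P)}, and never uniformly over all $g$.
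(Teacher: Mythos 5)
Your proof is correct and follows essentially the same route as the paper's: transport the given coloring of $\hom(a,c)$ into an $r$-coloring of $\hom(d_1,d_3)$, apply {\rm (P)} for $\delta$ at $d_1,d_2$ to get $g'\in\hom(d_2,d_3)$, and return the witness to $C$ as $\psi(g')$, using \eqref{E:crtr} at the single morphism $g'$ exactly as the paper does. The only cosmetic difference is that you define the transported coloring globally as $\chi'=\chi\circ\zeta$, whereas the paper defines $\chi'$ only on morphisms of the form $g\cdot\phi(f,g)$ by $\chi'\big(g\cdot\phi(f,g)\big)=\chi\big(\psi(g)\cdot f\big)$ (well-defined precisely because $\zeta$ exists) and then extends it arbitrarily; the two colorings agree on the set where the pigeonhole conclusion is used, so the arguments coincide.
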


\begin{proof} Fix the number of colors $r\in {\mathbb N}$. Let 
$\delta$ be a functor with domain $D$ such that $\gamma$ at $a,b$ is modeled by $\delta$ at $d_1, d_2 \in {\rm ob}(D)$ with $\delta$ fulfilling (P) at $d_1,d_2$. 
Let $d_3\in {\rm ob}(D)$ witness property (P) for $\delta$ at $d_1, d_2$ with $r$ colors. 
Find $c\in {\rm ob}(C)$ given for $d_3$ by the definition of modeling. So $a,b,c$ and $d_1, d_2, d_3$ are cross-related by a  
pair of functions $(\phi, \psi)$ as in the definition of modeling. 

We claim that $c$ witnesses property (P) for $\gamma$ at $a,b$
with $r$ colors. Let $\chi$ be an $r$-coloring of ${\rm hom}(a,c)$. 
For $f\in {\rm hom}(a,b)$ and $g\in {\rm hom}(d_2, d_3)$, define 
\begin{equation}\label{E:chchp}
\chi'\bigl(g\cdot \phi(f,g)\bigr) =  \chi\big( \psi(g)\cdot f\big) 
\end{equation}
Note that $\chi'$ is well defined since $a,b,c$ and $d_1, d_2, d_3$ are cross-related by $(\phi, \psi)$. 
The function $\chi'$ is defined on a subset of ${\rm hom}(d_1, d_3)$. We extend it to ${\rm hom}(d_1, d_3)$ in an arbitrary way to get 
an $r$-coloring $\chi'$ of ${\rm hom}(d_1, d_3)$.
Now, by our choice of $d_3$, there exists $g\in {\rm hom}(d_2, d_3)$ such that, for each $h, h'\in {\rm hom}(d_1, d_2)$, 
\[
\delta h =\delta h'\Longrightarrow \chi'(g\cdot h) = \chi'(g\cdot h');
\]
in particular, for so chosen $g$, for all $f,f'\in {\rm hom}(a,b)$, we have 
\[
\delta\big(\phi(f,g)\big) =\delta\big(\phi(f',g)\big)\Longrightarrow \chi'\bigl(g\cdot \phi(f,g)\bigr) = \chi'\bigl(g\cdot \phi(f',g)\bigr).
\]
By our choice of $\delta$, whose relationship with $\gamma$ is given by \eqref{E:crtr}, and the definition \eqref{E:chchp} of $\chi'$, the implication above yields, 
for all $f,f'\in {\rm hom}(a,b)$, 
\[
\gamma f=\gamma f'\Longrightarrow \chi\bigl(\psi(g)\cdot f\bigr) = \chi\bigl(\psi(g)\cdot f'\bigr). 
\]
Thus, condition (P) for $\gamma$ at $a,b$ is proved. 
\end{proof}

To transfer bounds on Ramsey degree, only the following version of 
modeling is needed. 
\begin{definition}
Let $c_1, c_2\in {\rm ob}(C)$ and let $d_1, d_2\in {\rm ob}(D)$. We say that $c_1, c_2$ is {\bf R-modeled by} 
$d_1, d_2$ if for each $d_3\in {\rm ob}(D)$ there exits $c_3\in {\rm ob}(C)$ such that $c_1, c_2, c_3$ and $d_1, d_2, d_3$ are cross-related. 
\end{definition}
With the above definition, one proves the following proposition, which strengthens \cite{Mas}. 

\begin{proposition}
Let $C, D$ be categories, and let $a,b\in {\rm ob}(C)$ and $d_1, d_2\in {\rm ob}(D)$. If $a,b$ is R-modeled by $d_1, d_2$, 
then 
\[
{\rm rd}(a,b)\leq {\rm rd}(d_1, d_2).
\] 
\end{proposition}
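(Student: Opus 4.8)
The plan is to mimic the proof of Theorem~\ref{T:mod}, but to track the \emph{number} of colors rather than merely the coincidence pattern forced by a functor. Write $k={\rm rd}(d_1,d_2)$; if $k=\infty$ there is nothing to prove, so assume $k<\infty$. To establish ${\rm rd}(a,b)\le k$, I fix $r\in{\mathbb N}$. Since $k={\rm rd}(d_1,d_2)$, there is an object $d_3\in{\rm ob}(D)$ such that for every $r$-coloring $\chi'$ of $\hom(d_1,d_3)$ there is $g\in\hom(d_2,d_3)$ for which $\chi'$ attains at most $k$ values on $g\cdot\hom(d_1,d_2)$. Invoking R-modeling for this particular $d_3$, I obtain an object $c\in{\rm ob}(C)$ so that $a,b,c$ and $d_1,d_2,d_3$ are cross-related; let $(\phi,\psi,\zeta)$ be witnessing functions. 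I claim this $c$ witnesses ${\rm rd}(a,b)\le k$ for the fixed $r$.

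Given an $r$-coloring $\chi$ of $\hom(a,c)=\hom(c_1,c_3)$, I transport it to $\hom(d_1,d_3)$ exactly as in Theorem~\ref{T:mod}: for $f\in\hom(a,b)$ and $g\in\hom(d_2,d_3)$ set
\[
\chi'\bigl(g\cdot\phi(f,g)\bigr)=\chi\bigl(\psi(g)\cdot f\bigr),
\]
noting that $\psi(g)\cdot f\in\hom(c_1,c_3)=\hom(a,c)$, so the right-hand side is defined. The cross-relatedness identity $\zeta\bigl(g\cdot\phi(f,g)\bigr)=\psi(g)\cdot f$ guarantees that $\chi'$ is well defined on the subset $\{g\cdot\phi(f,g)\}$ of $\hom(d_1,d_3)$: if $g\cdot\phi(f,g)=g'\cdot\phi(f',g')$, applying $\zeta$ gives $\psi(g)\cdot f=\psi(g')\cdot f'$, hence equal $\chi$-values. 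Extending $\chi'$ arbitrarily to all of $\hom(d_1,d_3)$ keeps it an $r$-coloring, since its values are drawn from those of $\chi$.

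Next I apply the defining property of $d_3$ to $\chi'$: there is $g\in\hom(d_2,d_3)$ such that $\chi'$ attains at most $k$ values on $g\cdot\hom(d_1,d_2)$. The morphism $\psi(g)\in\hom(c_2,c_3)=\hom(b,c)$ is the desired witness. Indeed, for every $f\in\hom(a,b)$ we have $\chi\bigl(\psi(g)\cdot f\bigr)=\chi'\bigl(g\cdot\phi(f,g)\bigr)$, and since $\phi(f,g)\in\hom(d_1,d_2)$, the element $g\cdot\phi(f,g)$ lies in $g\cdot\hom(d_1,d_2)$. Hence every color $\chi$ takes on $\psi(g)\cdot\hom(a,b)$ is among the at most $k$ colors $\chi'$ takes on $g\cdot\hom(d_1,d_2)$, so $\chi$ attains at most $k$ values on $\psi(g)\cdot\hom(a,b)$, as required.

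The argument is short; the one step meriting care is the color-bound transfer in the last paragraph, which hinges on $\phi(f,g)$ always landing in $\hom(d_1,d_2)$ (not merely in $\hom(d_1,d_3)$), so that $g\cdot\phi(f,g)$ is confined to the set $g\cdot\hom(d_1,d_2)$ on which degree $k$ was achieved. Well-definedness of $\chi'$ and the bookkeeping of the several hom-sets are the only other points, both discharged by the cross-relatedness identity.
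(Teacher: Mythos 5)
Your proposal is correct and follows essentially the same route as the paper's own proof: both fix $r$, take $d_3$ witnessing ${\rm rd}(d_1,d_2)\le k$, pull back a $c$ from R-modeling, transport the coloring via $\chi'\bigl(g\cdot\phi(f,g)\bigr)=\chi\bigl(\psi(g)\cdot f\bigr)$ as in Theorem~\ref{T:mod}, and conclude that $\psi(g)$ confines $\chi$ to at most $k$ colors on $\psi(g)\cdot\hom(a,b)$. The only difference is presentational: the paper cites the construction of $\chi'$ from Theorem~\ref{T:mod} rather than repeating it, while you spell out the well-definedness via $\zeta$ explicitly, which is a faithful expansion of the same argument.
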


\begin{proof} This proof is parallel to the proof of Theorem~\ref{T:mod}. 
Let $k= {\rm rd}(d_1, d_2)$. We can assume that $k<\infty$. 
We show that $ {\rm rd}(a,b)\leq k$. Fix $r\in {\mathbb N}$. Let now $d_3\in {\rm ob}(D)$ witness ${\rm rd}(d_1, d_2)\leq k$ for $r$-colorings. 
Let $c\in {\rm ob}(C)$ be provided from the definition of R-modeling so that 
$a,b,c$ and $d_1, d_2, d_3$ are cross-related by $(\phi, \psi)$. 

We claim that $c$ chosen above witnesses ${\rm rd}(a,b)\leq k$ for $r$-colorings. 
To check this claim, let now $\chi$ be an $r$-coloring of ${\rm hom}(a,c)$. Define a coloring $\chi'$ of ${\rm hom}(d_1, d_3)$ exactly 
as is done around formula \eqref{E:chchp} in the proof of Theorem~\ref{T:mod}. Now by our choice of $k$, there is $g\in {\rm hom}(d_2, d_3)$ 
such that $\chi'$ attains at most $k$ colors on the set 
\[
\{ g\cdot f\mid f\in {\rm hom}(d_1, d_2)\}. 
\]
By the definition of $\chi'$, we see that for so chosen $g$, $\chi$ attains at most $k$ colors on the set 
\[
\{ \psi(g)\cdot f\mid f\in {\rm hom}(a,b)\},
\]
as required. 
\end{proof}

\subsection{Example---the Hales--Jewett theorem} 

We fix $k_0\in {\mathbb N}$. Below, for a function $f$, ${\rm im}(f)$ will stand for the set of all values of $f$.

We define a category ${\rm HJ}_{k_0}$ and its endofunctor $\partial_{k_0}$, which will be used to prove the Hales--Jewett theorem.

Objects of ${\rm HJ}_{k_0}$ are as follows: 
\begin{enumerate}
\item[---] natural numbers $l\in {\mathbb N}$;

\item[---] surjections $v\colon [-k_0, 0] \to [k]$, for some $k\in {\mathbb N}$. 
\end{enumerate}
In the remainder of this section, $l$, possibly with subscripts, will stand for objects of ${\rm HJ}_{k_0}$ of the first kind above 
and $v$, possibly with subscripts, will stand for objects of the second kind. 

Morphisms of ${\rm HJ}_{k_0}$ will be appropriate functions. We describe them as follows: 
\begin{enumerate}
\item[---] $f\in {\rm hom}(v, l)$ is a function $f\colon [l]\to {\rm im}(v)$; 
\item[---] $g\in {\rm hom}(l_1, l_2)$ is a function $g\colon [l_2] \to [-k_0, l_1]$ such that ${\rm im}(g)\supseteq [l_1]$; 
\item[---] there are no other morphisms except for the identities. 
\end{enumerate}

In order to define composition of morphisms in ${\rm HJ}_{k_0}$, we need to introduce a new piece of notation. For two functions 
$h$ and $h'$ domains are disjoint intervals $I$ and $I'$ of ${\mathbb Z}$, respectively, let 
\[
h^\frown h'
\]
stand for the function whose domain is $I\cup I'$ and whose restrictions to $I$ and $I'$ are $h$ ad $h'$, respectively. Now 
composition in ${\rm HJ}_{k_0}$ is defined as follows: 
\begin{enumerate}
\item[---] for $f\in {\rm hom}(v, l_1)$ and $g\in {\rm hom}(l_1, l_2)$, let 
\[
g\cdot f = (v^\frown f)\circ g. 
\]
\item[---] for $g_1\in {\rm hom}(l_1, l_2)$ and $g_2\in {\rm hom}(l_2, l_3)$, let  
\[
g_2\cdot g_1 = \big(({\rm id}_{[-k_0,0]})^\frown g_1\big)\circ g_2. 
\]
\end{enumerate} 
Note that $v^\frown f$ and $({\rm id}_{[-k_0,0]})^\frown g_1$, whose domains are $[-k_0, l_1]$ and $[-k_0, l_2]$, respectively.

Before, we define the functor $\partial_{k_0}\colon {\rm HJ}_{k_0}\to {\rm HJ}_{k_0}$, we need to modify \eqref{E:dotmin}. For $n\in {\mathbb N}$, $n\geq 1$, 
let 
\[
n\divdot 1 = \max( n-1, 1), 
\]
The functor $\partial_{k_0}\colon {\rm HJ}_{k_0}\to   {\rm HJ}_{k_0}$ is now defined on objects by:
\begin{enumerate}
\item[---] $\partial_{k_0}(l)=l$;

\item[---] $\partial_{k_0}(v) = \min\big( v,\, \max ({\rm im}(v)) \divdot 1\big)$;
\end{enumerate}
and on morphisms by: 
\begin{enumerate} 
\item[---] $\partial_{k_0}(g)=g$, for $g\in {\rm hom}(l_1, l_2)$; 

\item[---] $\partial_{k_0}(f) =  \min \big( f,\, \max ({\rm im}(v)) \divdot 1 \big)$, for $f\in {\rm hom}(v, l)$. 
\end{enumerate}

\begin{lemma}\label{L:hjt}
\begin{enumerate} 
\item[(i)] $\partial_{k_0}$ is a functor and it is frank. 

\item[(ii)] The functor $\partial_{k_0}: {\rm HJ}_{k_0}\to {\rm HJ}_{k_0}$ fulfills {\rm (P)}. 
\end{enumerate}
\end{lemma}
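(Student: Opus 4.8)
I would dispatch the two parts separately, handling (i) by direct computation and reducing the substance of (ii) to the product pigeonhole already established in Corollary~\ref{C:prr}.

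For part (i), I would verify functoriality by checking that $\partial_{k_0}$ preserves identities and both composition rules. The only point needing care is that capping by $\min(\,\cdot\,,\max({\rm im}(v))\divdot 1)$ commutes with the substitution $g\cdot f=(v^\frown f)\circ g$; this holds because $\partial_{k_0}$ fixes the constant block $[-k_0,0]$ and every $g\in{\rm hom}(l_1,l_2)$, so the cap may be applied before or after substituting. For frankness I would argue by cases on the two kinds of objects. If the target is a number $l'$, take $b=l'$: on ${\rm hom}(l,l')$ the functor is the identity, and each $f'\in{\rm hom}(\partial_{k_0}v,l')$ is its own preimage in ${\rm hom}(v,l')$ since its values already lie below the cap, so $\partial_{k_0}$ is onto ${\rm hom}(\partial_{k_0}v,l')$. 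If the target is a surjection $v'$ onto $[k']$, I would use that objects of the second kind admit no non-identity incoming morphisms: when $\partial_{k_0}a=v'$ take $b=a$, and otherwise lift $v'$ to a $v$ by promoting one position of value $k'$ to a fresh top value $k'+1$, so that $\partial_{k_0}v=v'$ while both relevant homsets are empty.

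For part (ii) I would first isolate the single nontrivial instance. At any pair $(a,b)$ with $b$ of the second kind, (P) holds trivially by taking $c=b$ and $g={\rm id}_b$, since then ${\rm hom}(a,c)$ is empty or a singleton. As $\partial_{k_0}$ is the identity on every ${\rm hom}(l_1,l_2)$ and on ${\rm hom}(v,l)$ whenever $\max({\rm im}(v))=1$, (P) is trivial at those pairs too, the hypothesis $\partial_{k_0}f_1=\partial_{k_0}f_2$ forcing $f_1=f_2$. Thus it remains to establish (P) at $v,l$ with $k:=\max({\rm im}(v))\ge 2$. For this case the plan is, rather than run a bare Hales--Jewett induction, to show that $\partial_{k_0}$ at $v,l$ is modeled by $\otimes_{\mathbb N}\partial_P$ at a suitable pair $d_1,d_2$ of objects of $\bigotimes_{\mathbb N}P$, and then to conclude by Theorem~\ref{T:mod} together with Corollary~\ref{C:prr}, which furnishes (P) for $\otimes_{\mathbb N}\partial_P$. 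Concretely, I would index by the $l$ letter-positions, setting $d_1=((k,1))_{[l]}$ and $d_2=((n_i,2))_{[l]}$, so that $\otimes_{\mathbb N}\partial_P$ acts coordinatewise by the top-two merge of Lemma~\ref{L:pfr}, exactly mirroring the coordinatewise cap $\min(\,\cdot\,,k-1)$ that $\partial_{k_0}$ performs on ${\rm hom}(v,l)=[k]^l$. Given a target $d_3$, which may be assumed to have support $[l]$ since otherwise ${\rm hom}(d_2,d_3)=\emptyset$ and cross-relatedness is vacuous, I would produce the object $c_3=m$ of ${\rm HJ}_{k_0}$ and the pair $(\phi,\psi)$ as follows: $\psi$ assembles the tuple of rigid surjections making up a morphism in ${\rm hom}(d_2,d_3)$ into a single variable word $g\in{\rm hom}(l,m)$, using one variable for the middle of each step function and filling the remaining positions with constants whose $v$-values realize the two merged letters $k-1,k$; while $\phi(f,g)$ records the letter $f(i)$ of $f$ in the middle interval of the $i$-th step function, keeping the merge scaffolding at the ends. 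With this encoding the compatibility condition \eqref{E:crtr} is immediate and coordinatewise, since $\partial_{k_0}f=\partial_{k_0}f'$ says $f,f'$ agree after merging $k-1,k$ in each coordinate, and $\partial_P$ then collapses the corresponding step functions identically.

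The main obstacle I anticipate is the construction and verification of $(\phi,\psi)$, namely translating the Hales--Jewett mechanism---one variable substituted simultaneously into a whole block of positions---into the coordinatewise structure of $\bigotimes_{\mathbb N}P$. The delicate bookkeeping is the cross-relatedness identity $\zeta\big(g\cdot\phi(f,g)\big)=\psi(g)\cdot f$: one must check that the substitution rule $g\cdot f=(v^\frown f)\circ g$ and the pattern-composition rule $g_2\cdot g_1=\big(({\rm id}_{[-k_0,0]})^\frown g_1\big)\circ g_2$ of ${\rm HJ}_{k_0}$ correspond, under $\phi,\psi$ and the read-off map $\zeta\colon{\rm hom}(d_1,d_3)\to{\rm hom}(v,m)$, to the reverse-order composition of $P$, and that the constant positions carrying $k-1,k$ are bookkept consistently on both sides. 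Once this identity is secured---so that \eqref{E:subs} yields the well-definedness of $\zeta$---the verification of the modeling is complete and Theorem~\ref{T:mod} finishes the argument.
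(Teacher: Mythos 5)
Your proof of (ii) takes essentially the same route as the paper: dispose of the pairs with empty or rigid hom-sets, and then, for the remaining pair $v,l$, show that $\partial_{k_0}$ at $v,l$ is modeled by $\otimes_{\mathbb N}\partial_P$ at the objects $\big((k,1)\big)_{i\leq l}$ and a tuple of step-function objects (the paper takes $(3,2)$ in every coordinate), concluding by Theorem~\ref{T:mod} and Corollary~\ref{C:prr}. Your $\phi$ (record $f(i)$ in the middle block, keeping $k$ and $k-1$ as scaffolding at the ends) and your $\psi$ (concatenate the $l$ step patterns into a single variable word, sending the variable positions of the $i$-th pattern to $i$ and the constant positions to letters of $[-k_0,0]$ realizing $k$ or $k-1$ under $v$) are exactly the paper's construction, down to the observation that $\phi$ need not depend on $g$ and the key identity $v^\frown(\phi_1(f)\circ p_1)^\frown\cdots{}^\frown(\phi_l(f)\circ p_l)=f\circ\psi(p)$, from which cross-relatedness and \eqref{E:crtr} follow coordinatewise.

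The one place where your argument does not go through as written is the frankness lift in (i), a part the paper leaves to the reader. Promoting a single position of value $k'$ to a fresh value $k'+1$ produces a surjection onto $[k'+1]$ only if $v'$ attains its maximum at least twice; if $v'$ attains $k'\geq 2$ exactly once (for instance, a bijection $[-k_0,0]\to[k_0+1]$), the promoted map misses the value $k'$ and is not an object of ${\rm HJ}_{k_0}$. Moreover, no repair is possible within the stated definitions: any $v$ with $\partial_{k_0}v=v'$ would have to be a surjection onto $[k'+1]$ agreeing with $v'$ where $v'<k'$ and taking values in $\{k',k'+1\}$ on $(v')^{-1}(k')$, so that set must have at least two elements. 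Hence such $v'$ has no $\partial_{k_0}$-preimage among objects at all; this is a defect of the frankness claim as literally stated (frankness requires every object of the codomain to be hit) rather than of your strategy, but your write-up should flag this case instead of asserting that the lift exists.
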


\begin{proof} (i) is done by an easy check that we leave to the reader.

We now prove (ii). Note that it is clear that $\partial_{k_0}$ fulfills (P) at pairs of objects of the form $(l_1,l_2)$, $(v_1, v_2)$, and $(l,v)$. Indeed, the morphism sets 
between objects in each of these pairs are empty with the exception of $l_1, l_2$ with $l_1\leq l_2$, in which case, $\delta_{k_0}$ is equal to the identity map on 
$\hom (l_1,l_2)$. 

It remains to check that $\partial_{k_0}$ fulfills (P) at pairs of the form $v, l\in {\rm ob}({\rm HJ}_{k_0})$. 
This goal will be achieved by showing that $\partial_{k_0}$ at $v, l$ is modeled by $\otimes_{\mathbb N} \partial$ at a pair of object of $\bigotimes_{\mathbb N}P$ that we will 
choose below, and using Theorem~\ref{T:mod} and Corollary~\ref{C:prr}. 
Let $k_1\in {\mathbb N}$ be such that ${\rm im}(v)= [k_1]$. So, at this point, we have fixed $v, l, k_1$, and, of course, $k_0$.

We define the two objects of $\bigotimes_{\mathbb N} P$ that will be used to model $\partial_{k_0}$ at $v,l$. 
Let $a = (a_i)_{i=1}^l$ and $b= (b_i)_{i=1}^l$, 
where $a_i, b_i\in {\rm ob}(P)$, for $1\leq i\leq l$, be defined by 
\[
a_i= (k_1, 1)\;\hbox{ and }\; b_i=(3,2). 
\]
So $a,b$ are objects in $\bigotimes_{\mathbb N} P$. To see that $\partial_{k_0}$ at $v,l$ is modeled by $\bigotimes_{\mathbb N} \partial_P$ at $a,b$, 
we fix an arbitrary object $c\in {\rm ob}\big(\bigotimes_{\mathbb N}P\big)$. 
We can assume that $c=(c_i)_{i=1}^l$ for some $c_i\in {\rm ob}(P)$ 
with $c_i = (m_i, 2)$ for some $m_i\in {\mathbb N}$, $1\leq i\leq l$. We need to find an object $l'$ in ${\rm HJ}_{k_0}$ 
and functions 
\[
\phi\colon {\rm hom}(v,l)\to {\rm hom}(a,b)\;\hbox{ and }\; \psi\colon {\rm hom}(b,c)\to {\rm hom}\big(l, l'\big)
\]
such that 
\begin{equation}\label{E:conp}
\begin{split}
p\cdot \phi(f)= p'\cdot \phi(f') \Rightarrow &\, \psi(p)\cdot f = \psi(p')\cdot f',\\ 
&\hbox{ for all } p, p'\in {\rm hom}(b,c), \, f, f'\in {\rm hom}(v,l).
\end{split}
\end{equation}
Translating (FP) to the situation dealt with here, the function 
$\phi$ should be defined on ${\rm hom}(v,l)\times {\rm hom}(b,c)$; but our $\phi$ will not depend on the second coordinate.

We can define $\phi$ right away. 
For $f\in {\rm hom}(v,l)$, set 
\[
\phi(f) = (\phi_1(f), \dots , \phi_l(f)), 
\]
where, for each $1\leq i\leq l$, $\phi_i(f): [3]\to [k_1]$ is defined by letting 
\[
\big(\phi_i(f)\big)(j) = 
\begin{cases}
k_1, &\text{ if } j=1;\\
f(i), &\text{ if } j=2;\\
\max(1, k_1-1), &\text{ if }j=3.
\end{cases}
\]
Note that $\phi_i(f)\in {\rm hom}(a_i,b_i)$, and, therefore, $\phi(f) \in {\rm hom}(a,b)$. 

With the definition of $\phi$ in hand, we state \eqref{E:conp} in more basic terms. Note that each $p\in {\rm hom}(b,c)$ is of the form 
\begin{equation}\label{E:pp}
p= (p_1, \dots, p_l),  
\end{equation}
where $p_i$ is in ${\rm hom}(a_i,b_i)$, that is, it is an non-decreasing surjections such that ${\rm im}(p_i) = [3]$ and ${\rm dom}(p_i) = [m_i]$, for each $1\leq i\leq l$. 
Similarly, we represent $p'\in {\rm hom}(b,c)$ as $(p_1', \dots, p_l')$. 
Now, \eqref{E:conp} becomes 
\begin{equation}\label{E:comf} 
\begin{split}
\Bigl( \phi_i(f)\circ p_i = \phi_i(f')\circ p'_i,\hbox{ for }&1\leq i\leq l\Bigr) \Rightarrow f\circ \psi(p) = f'\circ \psi(p')\\
&\hbox{ for all } p, p'\in {\rm hom}(b,c), \, f, f'\in {\rm hom}(v,l).
\end{split}
\end{equation}

It remains to define $l'$ and $\psi$ for which \eqref{E:comf} holds. Set 
\[
l'= m_1+\cdots +m_l.
\]
For $p$ as in \eqref{E:pp}, define 
$g= \psi(p) \in {\rm hom}(l, l')$ as follows. If $j\in [m_1+\cdots + m_l]$, let $i$ be the unique natural number such that 
\[
m_1+\cdots + m_{i-1}< j \leq m_1+\cdots + m_i.
\]
Then if $p_i(j- (m_1+\cdots + m_{i-1})) = 2$, let 
\[
g(j) = i,
\]
otherwise, let $g(j)$ be a number in $[-k_0, 0]$ such that 
\[
v(g(j))=k_1, \hbox{ if }p_i(j- (m_1+\cdots + m_{i-1})) = 1, 
\]
and 
\[
v(g(j))=\max(1, k_1-1), \hbox{ if } p_i(j- (m_1+\cdots + m_{i-1})) = 3. 
\]

With the definitions above, it is easy to check that, for $p\in {\rm hom}(b,c)$ represented as in \eqref{E:pp} and for $f\in {\rm hom}(v,l)$, we have 
\[
v^\frown(\phi_1(f)\circ p_1)^\frown\cdots{}^\frown (\phi_l(f)\circ p_l) = f\circ \psi(p),
\]
from which \eqref{E:comf} follows immediately. 

To finish the proof of (FP), it remains to show that for $f, f'\in {\rm hom}(v, l)$, 
\[
\partial_{k_0} f = \partial_{k_0} f' \Rightarrow \big(\bigotimes_{\mathbb N} \partial_P\big) \big(\phi(f)\big) = \big(\bigotimes_{\mathbb N} \partial_P\big) \big(\phi(f')\big), 
\]
which amount to proving 
\[
\partial_{k_0} f = \partial_{k_0} f' \Rightarrow \partial_P \big(\phi_i(f)\big) =  \partial_P\big(\phi_i(f')\big),\;\hbox{ for }i= 1, \dots, l. 
\]
\end{proof} 

The following is the Hales--Jewett theorem, see \cite{Nes}. 

\begin{corollary}\label{C:hj} 
For each $k, l\in {\mathbb N}$ and $r\in {\mathbb N}$, there exists $m\in {\mathbb N}$ such that for each 
$r$-coloring of all functions from $[m]$ to $[-k, 0]$, there 
is $g\colon [m]\to [-k, l]$, with ${\rm im}(g)\supseteq [l]$, such that the set 
\[
\{ f\circ g\mid f\colon [-k,l]\to [-k,0], \, f\res [-k,0]= {\rm id}_{[-k,0]}\} 
\]
is monochromatic. 
\end{corollary}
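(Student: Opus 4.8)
The plan is to read the Hales--Jewett theorem as the assertion that a suitable pair of objects of ${\rm HJ}_{k_0}$ has Ramsey degree $1$ (the Ramsey property), and then to translate that categorical statement back into the combinatorial one. Concretely, I would set $k_0=k$ and take $v$ to be the bijection $[-k,0]\to[k+1]$, so ${\rm im}(v)=[k+1]$, and work at the pair $a=v$, $b=l$. Then ${\rm hom}(v,m)=\{F\colon[m]\to[k+1]\}$ is carried by $F\mapsto v^{-1}\circ F$ bijectively onto the set of functions $[m]\to[-k,0]$ colored by the theorem, while ${\rm hom}(l,m)$ is exactly the set of $g\colon[m]\to[-k,l]$ with ${\rm im}(g)\supseteq[l]$ from the theorem. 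The last piece of the dictionary is composition: for $f'\in{\rm hom}(v,l)$, writing $\bar f=v^{-1}\circ f'$ and $f=({\rm id}_{[-k,0]})^\frown\bar f$, one checks $v^\frown f'=v\circ f$, and hence $g\cdot f'=(v^\frown f')\circ g=v\circ(f\circ g)$. Thus, through the bijection $v$, the set $g\cdot{\rm hom}(v,l)$ corresponds exactly to the set $\{f\circ g\mid f\colon[-k,l]\to[-k,0],\ f\res[-k,0]={\rm id}_{[-k,0]}\}$ appearing in the theorem.

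Next I would bring in the machinery already established. By Lemma~\ref{L:hjt}, $\partial_{k_0}$ is frank and fulfills (P), so Corollary~\ref{C:pest}(i), applied to the one-generator semigroup $\langle\{\partial_{k_0}\}\rangle=\{\partial_{k_0}^n\mid n\geq1\}$, gives
\[
{\rm rd}(v,l)\ \leq\ \min_{n\geq1}\ \big|\partial_{k_0}^n\big({\rm hom}(v,l)\big)\big|.
\]
The combinatorial heart is the observation that iterating $\partial_{k_0}$ exactly $k$ times collapses ${\rm hom}(v,l)$ to a single morphism. Since $\max({\rm im}(v))=k+1$ and each application both lowers the cap by one and lowers $\max({\rm im}(\partial_{k_0}^{\,j-1}v))$ by one, an easy induction gives $\partial_{k_0}^{\,j}(f)=\min(f,\,k+1-j)$ for $1\leq j\leq k$ and every $f\in{\rm hom}(v,l)$. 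Hence $\partial_{k_0}^{\,k}(f)=\min(f,1)$ is the constant morphism $1$ for all $f$, so $|\partial_{k_0}^{\,k}({\rm hom}(v,l))|=1$ and therefore ${\rm rd}(v,l)\leq1$; since ${\rm hom}(v,l)\neq\emptyset$, this forces ${\rm rd}(v,l)=1$, i.e. the Ramsey property at $v,l$.

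Finally I would unwind the Ramsey property. Given $r$, it yields an object $c$, which may be taken to be a natural-number object $m$ (only these carry the relevant morphisms), such that every $r$-coloring $\chi$ of ${\rm hom}(v,m)$ admits $g\in{\rm hom}(l,m)$ making $g\cdot{\rm hom}(v,l)$ monochromatic. Starting from an $r$-coloring $\chi_0$ of the functions $[m]\to[-k,0]$, I would set $\chi(F)=\chi_0(v^{-1}\circ F)$; the resulting $g$ together with $g\cdot f'=v\circ(f\circ g)$ then makes $\{f\circ g\}$ monochromatic, which is exactly the desired conclusion (the degenerate cases $k=0$ or $l=0$ being trivial). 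I expect the main obstacle to be not any single deep step but getting this dictionary precisely right---in particular the $^\frown$-bookkeeping establishing $g\cdot f'=v\circ(f\circ g)$, and the use of a \emph{bijective} $v$ so that colorings transfer in both directions; once this is in place, the Ramsey-degree computation above is short.
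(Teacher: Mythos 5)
Your proposal is correct and follows essentially the same route as the paper's own proof: the paper likewise works in ${\rm HJ}_k$ at the pair ${\rm id}_k$ (your bijection $v$) and $l$, invokes Lemma~\ref{L:hjt} together with Corollary~\ref{C:pest}(i) for $\Delta=\{\partial_k\}$, and observes that an iterate of $\partial_k$ collapses ${\rm hom}({\rm id}_k,l)$ to a single element, so that ${\rm rd}({\rm id}_k,l)=1$. The only differences are cosmetic: you make explicit the dictionary between the categorical statement and the combinatorial one (which the paper leaves implicit), and you iterate $\partial_k$ exactly $k$ times where the paper says $k-1$, an immaterial discrepancy since the bound in Corollary~\ref{C:pest}(i) is a minimum over all elements of $\langle\Delta\rangle$.
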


\begin{proof} Fix $k, l\in {\mathbb N}$. Consider the category ${\rm HJ}_k$ and the objects ${\rm id}_k$ and $l$ in it. Note that the conclusion of the corollary follows from 
${\rm rd}({\rm id}_k, l)=1$. To prove this equality, observe that the set 
\[
(\partial_k\circ \cdots \circ \partial_k)\big( {\rm hom}({\rm id}_k, l)\big),
\]
where $\partial_k$ is composed $k-1$ times, has one element. 
By Lemma~\ref{L:hjt} combined with Corollary~\ref{C:pest}(i) for $\Delta=\{ \partial_k\}$, the equality ${\rm rd}({\rm id}_k, l)=1$ follows immediately. 
\end{proof}

With some additional routine work, the methods used to prove Corollary~\ref{C:hj} can be adapted to proving more general versions of the Hales--Jewett theorem as in 
\cite[Section~8.1]{Sol} or \cite[Lemma~3.3]{Solt}. In these generalizations, one obtains concrete Ramsey statements, in which ${\rm rd}(a,b)$ may be strictly bigger than $1$.

\section{Proving condition (P)}\label{S:prp}

The goal of this section is to formulate a local version of condition (P) and prove that, in most circumstances, it implies (P).

\subsection{Condition (FP)}

We state the local version, we call (FP), of the pigeonhole principle (P). In applications, it is often easier to check directly (FP) than (P). 
Let $\delta\colon C\to D$ be a functor. Recall the statement of (P) for a functor $\delta$ from Definition~\ref{D:defp}. 
Note that the property of $g\in \hom(b,c)$ in condition (P) can be rephrased as follows: 

\smallskip

\centerline{\em for each $f'\in \delta\big(\hom(a, b)\big)$, $\chi(g\scirc f)$ is constant for $f\in \hom(a,b)$ with $\delta f = f'$.} 

\smallskip

\noindent Above, $g$ is chosen first and independently of $f'$. This feature is relaxed when passing to (FP) from (P), namely, in (FP) 
it suffices to find $g$ that depends on $f'$. 
The price of this relaxation is included in the second point of condition (FP). It has to do with controlling the behavior of $\delta g$ in a suitable way. 

It will be convenient to introduce the following 
piece of notation. Let $C$ be a category $C$ and $\delta$ a functor defined on $C$. For $a,b\in {\rm ob}(C)$ and $h\in \delta\big( {\rm hom}(a,b)\big)$, let 
\[
{\rm hom}(a,b)_h =\{ f\in {\rm hom}(a,b)\mid \delta f =h\}. 
\]

\begin{definition}
We say that $\delta$ {\bf fulfills (FP) at} $a, b\in {\rm ob}(C)$, if 
for $r\in {\mathbb N}$ and a finite non-empty set $s\subseteq \delta\big( \hom( a, b)\big)$ the following condition holds:

\noindent there exist $c\in {\rm ob}(C)$, $f'\in s$, and $g'\in \delta\big( \hom( b, c)\big)$ such that for each $r$-coloring $\chi$ of $\hom(a,c)$, there
exists $g\in \hom(b,c)$ with 
\begin{enumerate}
\item[---] $g\scirc {\rm hom}(a,b)_{f'}$ $\chi$-monochromatic and 

\item[---] $(\delta g)\scirc e = g'\scirc e$, for each $e\in s$.
\end{enumerate}
\end{definition} 

As with condition (P), we say that $\delta$ {\bf fulfills (FP) at} $a\in {\rm ob}(C)$, if it fulfills {\rm (FP)} at $a,b$ for all $b\in {\rm ob}(C)$; and we simply say 
that $\delta$  {\bf fulfills (FP)}, if it fulfills {\rm (FP)} at $a,b$ for all $a, b\in {\rm ob}(C)$.

\subsection{Example---a frank functor with (FP)} 
Recall Example 3 from Section~\ref{Su:exp}. We check condition (FP) for $\partial_R$ from this example.

\begin{lemma}\label{L:ramram} 
$\partial_R\colon R\to R$ fulfills $\rm (FP)$. 
\end{lemma}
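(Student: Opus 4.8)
\textbf{The plan is to} verify the definition of (FP) directly for $\partial_R$, working at a pair of objects $m, n$ (recall objects of $R$ are natural numbers and morphisms from $m$ to $n$ are pairs $(x,n)$ with $x\subseteq[n]$, $|x|=m$). The functor $\partial_R$ sends $(x,n)$ to $(x\setminus\{\max x\}, n\dotdiv 1)$, so two morphisms $(x,n),(x',n)\in\hom(m,n)$ have the same $\partial_R$-image exactly when they agree after deleting their top elements, i.e. $x$ and $x'$ share the same bottom $m-1$ elements and differ only in $\max x$. Thus the fiber $\hom(m,n)_h$ over a given $h=(y,n\dotdiv1)$ (with $|y|=m-1$) consists of all sets $y\cup\{t\}$ with $t>\max y$, $t\le n$; concretely this is the data of choosing the top element freely above $y$. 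This is precisely the situation governed by the ordinary finite Ramsey theorem, and I expect the combinatorial core to be exactly that.

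\textbf{First I would} fix $r\in\mathbb N$ and a finite nonempty $s\subseteq\partial_R(\hom(m,n))$, and I would choose a single distinguished $f'\in s$; because of the structure above, the natural choice is to let $f'$ correspond to the $\partial_R$-image of the ``bottom'' $m$-subset of $[n]$, though any element of $s$ can be handled symmetrically. The object $c$ witnessing (FP) should be a sufficiently large $N$, chosen by the Ramsey number $N=\mathrm{Ram}(m,r,\dots)$ large enough that every $r$-coloring of the $m$-subsets of $[N]$ has a monochromatic $(n)$-subset (more precisely, a copy of $[n]$ inside which all the relevant $m$-subsets receive one color). I would then specify $g'\in\partial_R(\hom(n,N))$ to be the $\partial_R$-image of the inclusion morphism $(z,N)$ where $z$ realizes the intended embedding; the second bullet of (FP), namely $(\partial_R g)\cdot e=g'\cdot e$ for $e\in s$, must be arranged by ensuring the chosen embedding $g$ of $n$ into $N$ restricts correctly on the lower coordinates, which is where the precomputed $g'$ does its bookkeeping.

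\textbf{The main step} is then: given an $r$-coloring $\chi$ of $\hom(m,N)$ (the $m$-subsets of $[N]$, tagged), I apply the finite Ramsey theorem to find an increasing embedding $g=(z,N)\in\hom(n,N)$ so that $g\cdot\hom(m,n)_{f'}$ is $\chi$-monochromatic — this set is the collection of $m$-subsets of the copy $z$ that arise by adjoining a top element to the fixed lower part coming from $f'$, and monochromaticity on it is exactly the Ramsey conclusion applied to one fiber. The compatibility condition $(\partial_R g)\cdot e=g'\cdot e$ must hold \emph{simultaneously for all $e\in s$}; since $\partial_R$ only deletes the maximum, and composition in $R$ reindexes $x$ through the unique increasing bijection $f_z$, the identity $(\partial_R g)\cdot e = g'\cdot e$ reduces to checking that the embedding $g$ and its preselected image $g'$ induce the same reindexing on the lower $m-1$ coordinates of each $e$; this holds as long as $z$ is chosen inside a fixed ``template'' copy used to define $g'$.

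\textbf{The hard part} — and the step I would flag as the genuine obstacle — is the interaction between the two bullets: the first bullet demands a \emph{Ramsey-chosen} $g$ (monochromatic fiber), while the second demands $g$ satisfy a \emph{rigid} algebraic identity against a \emph{preselected} $g'$ valid for every $e\in s$ at once. Reconciling these requires setting up the witnessing object $N$ and the map $g'$ so that the Ramsey-theoretic freedom (where the monochromatic copy lands) does not destroy the fixed low-coordinate behavior; concretely, I expect one must first fix a large ambient copy encoding $g'$, apply Ramsey \emph{inside} that copy to the single distinguished fiber over $f'$, and verify that deleting maxima (the action of $\partial_R$) annihilates precisely the coordinate where the Ramsey copy was allowed to move, so that all $e\in s$ see the same low part. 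Checking that $\partial_R$ ``forgets'' exactly the varying coordinate is the crux, and it is what makes $\partial_R$ fulfill (FP) rather than the stronger (P) directly.
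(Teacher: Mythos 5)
Your proposal correctly identifies the fiber structure of $\partial_R$ and correctly flags the tension between the two bullets of (FP), but the way you resolve that tension is wrong, and the error is fatal. The key point you miss is that the choice of $f'\in s$ is \emph{not} symmetric: the paper's proof takes $f'=(x',l-1)$ to be an element of $s$ whose maximum $p=\max x'$ is \emph{largest} among all elements of $s$. This guarantees that every $e=(x'',l-1)\in s$ has $x''\subseteq[p]$, so the rigidity constraint $(\partial_R g)\cdot e=g'\cdot e$ pins only the first $p$ coordinates of the witnessing embedding $g=(z,m)$ (the paper takes $g'=([l-1],m-1)$ and $z=[p]\cup I$, so $z$ is literally the identity on $[p]$), while the pigeonhole freedom lives in the coordinates at positions $p+1,\dots,l$, which is exactly where the fiber over $f'$ varies. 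Your ``natural choice'' of the bottom element of $s$, and your claim that any element of $s$ can be handled symmetrically, destroy precisely this disjointness.

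Concretely: take $k=2$, $l=3$, $s=\{(\{1\},2),(\{2\},2)\}$, and $f'=(\{1\},2)$ as you propose. For any object $N$ and any $g'=(\{y_1'<y_2'\},N-1)$, the second bullet applied to the two elements of $s$ forces $z_1=y_1'$ and $z_2=y_2'$ for $z=\{z_1<z_2<z_3\}$, i.e., it pins \emph{both} lower coordinates of $z$. The fiber over $f'$ maps under $g$ to $\{(\{z_1,z_2\},N),(\{z_1,z_3\},N)\}$, and the $2$-coloring $\chi$ defined by $\chi\big((\{a,b\},N)\big)=1$ iff $b=y_2'$ separates these two morphisms no matter how $z_3$ is chosen (since $z_3>z_2=y_2'$). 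So with your choice of $f'$ the required witnesses $c$ and $g'$ do not exist at all; (FP) can only be verified via the maximal choice. Two further inaccuracies in your sketch stem from the same misunderstanding. First, choosing $z$ ``inside a fixed template copy'' does not give the coordinate-exact identity $(\partial_R g)\cdot e=g'\cdot e$; that identity pins specific coordinates of $z$, not merely its range. Second, deleting the maximum of $z$ does not ``annihilate the coordinates where the Ramsey copy moves'': the fiber over $f'$ occupies positions $p+1,\dots,l$ of $z$, of which $\partial_R$ deletes only the last; what saves the argument is that, under the maximal choice of $f'$, no element of $s$ can see any of those positions. A minor further remark: once $f'$ is chosen maximally and the low coordinates are frozen, the full Ramsey theorem you invoke is unnecessary—a one-dimensional pigeonhole on the color of $x'\cup\{t\}$ for $t\in[l,m]$ suffices, which is what the paper does with $m=(r+1)l$.
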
 

\begin{proof} This proof amounts to an application of the standard pigeonhole principle. Fix $r\in {\mathbb N}$. Let 
$k, l$ be two objects in $R$, and let $\emptyset\not= s\subseteq \partial_R\big( {\rm hom}( k, l)\big)$. To avoid trivial cases, 
we can assume that ${\rm hom}\big( k, l\big)$ has at least two elements and that $k\geq 1$, therefore, $1\leq k< l$. 

For the two objects $k$ and $l$ and the number of colors $r$, we need to find, in the notation of (FP), 
and object $c$ and two morphisms $f'$ and $g'$. First, we define the object by letting 
\[
m=(r+1)l \in {\rm ob}(R).
\]
Next, we define the two objects. Pick $(x', l-1)\in \partial_R\big( {\rm hom}( k, l)\big)$ so that  
\begin{equation}\label{E:ob1}
(x',l-1)\in s\;\hbox{ and }\,\max x' = \max\{ \max x''\mid (x'', l-1)\in s\}, 
\end{equation}
and let 
\begin{equation}\label{E:ob2}
(y', m-1)=([l-1], m-1) \in \partial_R\big( {\rm hom}( l, m)\big).
\end{equation} 
By convention, if $k=1$, we interpret the above definition to give $x'=\emptyset$. 
We claim that this choice of the object $m$ and the morphisms $(x',l-1)$ and $(y', m-1)$ ensures that (FP) 
are satisfied. 

To prove this claim, let $\chi$ be an $r$-coloring of ${\rm hom}(k, m)$. 
For $i\in [l,m]$, set 
\[
{\bf x}_i = (x'\cup\{ i\}, m)\in {\rm hom}(k,m), 
\]
and consider the $r$-coloring of $[l,m]$ given by 
\begin{equation}\label{E:cou}
[l,m]\ni i\to  \chi({\bf x}_i). 
\end{equation} 
Set 
\[
p =\max x'\leq l-1,
\]
with $p=0$, if $x'=\emptyset$, by convention. Note that, by the choice of $m$, there is a subset $I$ of $[l,m]$ of size $l-p$ on which the $r$-coloring \eqref{E:cou} is constant, which 
means that $\chi$ is constant on ${\bf x}_i$ as $i$ varies over $I$. 
Define 
\[
{\bf y}= ( [p] \cup I,m)\in {\rm hom}(l, m).
\]
This is the morphism $g$ in the notation from (FP). We need to check that $\bf y$ satisfies the two points displayed in (FP).  

For $(x,l) \in {\rm hom}(k,l)$ with $\partial_R (x,l) = (x', l-1)$, we have 
\[
{\bf y}\cdot (x, l)= {\bf x}_i,\hbox{ for some }i\in I.
\]
Therefore, $\chi({\bf y}\cdot (x,l))$ is constant for $(x, l)\in {\rm hom}(k,l)$ with $\partial_R (x, l) = (x', l-1)$, 
and the first point in (FP) is checked for $\bf y$. 
To see the second point, note that, for each $(x'', l-1)\in s$, we have 
\[
\partial_R {\bf y} \cdot (x'', l-1) = (x'', m-1)  = (y', m-1)\cdot (x'', l-1), 
\]
using \eqref{E:ob1} the get the first equality and \eqref{E:ob1} and \eqref{E:ob2} to get the second one. 
Thus, (FP) follows. 
\end{proof}

\subsection{Condition (FP) implies (P)} 

The following theorem is the main result of Section~\ref{S:prp}. It shows that under mild assumptions the local condition (FP) implies (P). 
In concrete situations, (FP) is usually much easier to check than (P). 

\begin{theorem}\label{T:lpp}
Let $\delta\colon C\to D$ be a functor, and let $a\in {\rm ob}(C)$. If $\delta$ fulfills $\rm (FP)$ at $a$, then 
$\delta$ fulfills $\rm (P)$ at $a,\, b$ for each $b\in {\rm ob}(C)$ with $\delta\big( \hom (a, b)\big)$ finite. 
\end{theorem}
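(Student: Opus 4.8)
We want to show that (FP) at $a$ implies (P) at $a,b$ whenever $\delta(\hom(a,b))$ is finite. Condition (P) requires a single $g$ that works for all fibers $\hom(a,b)_{f'}$ simultaneously; (FP) only gives, for each nonempty finite $s\subseteq\delta(\hom(a,b))$, an object $c$ and one chosen $f'\in s$ together with control on $\delta g$. So the engine of the proof must be an iteration that peels off the fibers one at a time, each application of (FP) handling one more value $f'\in\delta(\hom(a,b))$, while the ``$(\delta g)\scirc e = g'\scirc e$'' clause is what lets the earlier successes survive the later steps.

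**The plan.** First I would set $s_0 = \delta(\hom(a,b))$, which is finite and nonempty (if empty the statement is vacuous), say $s_0=\{f_1',\dots,f_n'\}$. The idea is to build a chain of objects $b=c_0, c_1, \dots, c_n=c$ together with morphisms $g_t\in\hom(c_{t-1},c_t)$, where at stage $t$ I apply (FP) at $a,c_{t-1}$ with the color count $r$ and with the set $s_{t-1}\subseteq\delta(\hom(a,c_{t-1}))$ obtained by pushing $s_0$ forward along the composite built so far. Each application of (FP) returns a $c_t$, a distinguished element of $s_{t-1}$, and a $g_t\in\hom(c_{t-1},c_t)$ making $g_t\scirc\hom(a,c_{t-1})_{f_t'}$ monochromatic, where $f_t'$ is the image under the accumulated composite of the original value we are trying to kill at this stage. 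Composing, $g=g_n\scirc\cdots\scirc g_1\in\hom(b,c)$ should be the witness for (P): the second clause of (FP), asserting $(\delta g_t)\scirc e = g_t'\scirc e$ for all $e$ in the current $s$, is precisely the bookkeeping that guarantees each fiber, once made monochromatic, is mapped into a single fiber at later stages and so stays monochromatic.

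**Making the bookkeeping precise.** The subtle point, and where I expect the real work to lie, is choosing the sets $s_{t}$ and tracking how the fibers $\hom(a,c_{t-1})_{f'}$ transform under composition with $g_t$. Concretely, for $f\in\hom(a,b)$ I have $\delta(g_t\scirc f)=(\delta g_t)\scirc(\delta f)$, and the clause $(\delta g_t)\scirc e=g_t'\scirc e$ (applied to $e=\delta f\in s_{t-1}$) shows that $\delta(g_t\scirc f)$ depends only on $\delta f$, not on $f$ itself. This is exactly what is needed so that ``being in a common fiber over $a,b$'' is preserved as a property of $g\scirc f$, letting me define $s_t$ as the image $g_t'\scirc s_{t-1}$ (equivalently $(\delta g_t)\scirc s_{t-1}$) and carry the induction forward. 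I would need to verify that this $s_t$ is again a nonempty finite subset of $\delta(\hom(a,c_t))$, so that (FP) applies at the next stage; nonemptiness is inherited, finiteness is automatic, and membership in $\delta(\hom(a,c_t))$ follows since $g_t'\in\delta(\hom(c_{t-1},c_t))$ and composition of morphisms in the image of $\delta$ stays in the image.

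**The main obstacle.** The genuinely delicate step is the invariant that once a fiber over some value $f'$ has been rendered monochromatic at stage $t$, it remains monochromatic after composing with the later $g_{t+1},\dots,g_n$. This is not automatic: monochromaticity of $g_t\scirc\hom(a,c_{t-1})_{f_t'}$ is a statement about $\chi$ (the coloring at the final object), so I must run the argument with the correct coloring at each stage. The clean way is to fix the $r$-coloring $\chi$ of $\hom(a,c)$ at the end and, for each $t$, pull it back to an $r$-coloring $\chi_t$ of $\hom(a,c_t)$ via $\chi_t(h)=\chi(g_n\scirc\cdots\scirc g_{t+1}\scirc h)$, then feed $\chi_t$ into the (FP) instance at stage $t$. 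The heart of the verification is then to check that the distinguished values $f_t'$ produced by the successive (FP) applications exhaust all of $s_0$ (so that every fiber is eventually handled) and that the second clause guarantees the pushforward maps different fibers to different values, so no fiber is ever merged with another in a way that destroys an earlier monochromatic coloring. I would carry out this fiber-tracking induction carefully, as the entire content of the theorem is concentrated in it; the base and the final assembly of $g$ are routine once the invariant is correctly stated.
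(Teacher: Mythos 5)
Your skeleton is essentially the paper's: apply (FP) once per element of $\delta\big(\hom(a,b)\big)$, building $c_0=b, c_1,\dots,c_n$ together with the $\delta$-level data $f_t', g_t'$ \emph{before} any coloring appears, then fix $\chi$ on $\hom(a,c_n)$ and choose the actual morphisms $g_n, g_{n-1},\dots,g_1$ backwards, feeding the pulled-back coloring $h\mapsto\chi(g_n\cdots g_{t+1}\cdot h)$ into the stage-$t$ instance, and finally use the clause $(\delta g)\cdot e=g'\cdot e$ to show that $g_{k-1}\cdots g_1\cdot h$ lands in the fiber made monochromatic at stage $k$. However, there is a genuine gap in how you guarantee that every fiber eventually gets handled. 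You define $s_t$ as the pushforward of \emph{all} of $s_0=\delta\big(\hom(a,b)\big)$ and then say one must ``check'' that the distinguished values produced by the successive (FP) applications exhaust $s_0$. With that choice of $s_t$ this cannot be checked: in (FP) the element $f'\in s$ is \emph{existentially} quantified---the principle hands you some $f'$, you do not get to pick it (your phrase ``the original value we are trying to kill at this stage'' presumes a choice that (FP) does not offer). Nothing prevents (FP) from returning, at every stage, the pushforward of the same already-handled value, so after $n$ stages you may have made only one fiber monochromatic. The missing device, which is exactly the paper's, is to shrink the set fed into (FP): at stage $k$ use
$s=\{\, g_{k-1}'\cdots g_1'\cdot f' \mid f'\in\delta\big(\hom(a,b)\big)\setminus\{f_1',\dots,f_{k-1}'\}\,\}$.
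Then the distinguished element is by construction the pushforward of some \emph{new} value, which you name $f_k'$, and exhaustion after exactly $n$ stages is automatic.

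The other half of what you call the heart of the verification---that the second clause of (FP) makes the pushforward send ``different fibers to different values''---is both unprovable and unnecessary. The clause $(\delta g)\cdot e=g'\cdot e$ asserts agreement of $\delta g$ with $g'$ on $s$; it gives no injectivity of $e\mapsto g'\cdot e$, and fibers may well merge. But merging is harmless: if two original values have the same pushforward, then when that common value is the distinguished one, monochromaticity on its fiber handles both original fibers simultaneously, and condition (P) only demands equal colors when $\delta f_1=\delta f_2$, so extra identifications cost nothing. Relatedly, once you adopt the pulled-back-coloring device, the ``survival of earlier monochromaticity'' worry disappears entirely: the stage-$k$ monochromaticity statement is already a statement about the final coloring precomposed with $g_n\cdots g_k$, so nothing established at stage $k$ needs to be preserved by later stages.
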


\begin{proof} Fix a functor $\delta$ and an object $a$. In order to prove that $\delta$ fulfills (P) at $a,b$, we 
fix $r>0$ and $b\in {\rm ob}(C)$ with finite $\delta\big( \hom( a, b)\big)$. Set $n=|\delta\big( \hom(a, b)\big)|$. 
By recursion, we construct 
\begin{enumerate}
\item[---] $c_k\in {\rm ob}(C)$, for $0\leq k\leq n$; 

\item[---] $g_k'\in \delta\big( \hom(c_{k-1}, c_k)\big)$, for $1\leq k\leq n$; 

\item[---] $f_k'\in \delta\big( \hom(a, b)\big)$, for $1\leq k\leq n$.
\end{enumerate}
Note that we enumerate the $c_k$-s starting with $k=0$ and the $g'_k$-s and $f'_k$-s starting with $k=1$. 
These objects will have the following properties for $0\leq k\leq n$, where we note that the first point in (c) makes sense 
as, by the conditions above, we have that $g'_{k-1}\cdots g'_1\cdot f'_k\in \delta\big( {\rm hom}(a, c_{k-1})\big)$.
\begin{enumerate}
\item[(a)] $c_0=b$;

\item[(b)] $f'_k \not= f'_i$ for all $i<k$; 

\item[(c)] for each $r$-coloring $\chi$ of $\hom(a, c_k)$ there exists $g\in \hom(c_{k-1}, c_k)$ such that 
\begin{enumerate}
\item[---] $\chi$ is constant on $g\cdot \bigl( \hom(a, c_{k-1})_{g_{k-1}'\cdot \cdot \cdot g'_1\cdot f_k'}\bigr)$; 

\item[---] $(\delta g) \cdot g'_{k-1}\cdot \cdot \cdot g'_1\cdot f' =  g'_k\cdot g'_{k-1}\cdot \cdot \cdot g'_1\cdot f'$ for all $f'\in \delta\big( \hom(a,b)\big)$ with $f'\not\in \{ f_i\mid i\leq k\}$. 
\end{enumerate}
\end{enumerate}
To start the construction, we set $c_0=b$. The conditions above for $k=0$ hold with (b) and (c) being vacuously true. 
Assume that $0<k\leq n$ and the construction has been carried out up to stage $k-1$. Consider the finite set 
\[
s= \{ g_{k-1}'\cdot \cdot \cdot g'_1\cdot f' \mid f'\in \delta\big( \hom(a, b)\big) \setminus \{ f'_i\mid i<k\}\}. 
\]
By our choice of $g'_i$ for $1\leq i<k$ and the assumption that $k\leq n$, we have that 
\[
\emptyset \not= s\subseteq \delta\big( \hom(a, c_{k-1})\big). 
\]
Now condition $\rm (FP)$ applied to $a,\, c_{k-1}$, and the set $s$ above allows us to pick $c_k\in {\rm ob}(C)$, $f'_k \in \delta\big( \hom(a,b)\big)$, and 
$g'_k\in \delta\big( \hom(c_{k-1}, c_k)\big)$ so that conditions (b) and (c) hold. 
The construction has been carried out. 

Observe that by the choice of $n$ and condition (b), we have 
\begin{equation}\label{E:exh}
\delta\big( \hom(a,b)\big) = \{ f_k'\mid 1\leq k\leq n\}. 
\end{equation}

We claim that $c=c_n$
witnesses that $\delta$ fulfills $\rm (P)$ at $a,\, b$ with $r$ colors; that is, 
for each $r$-coloring $\chi$ of $\hom(a,c)$
there is $g\in \hom(b,c)$ such that, for $h_1, h_2\in \hom(a,b)$, we have
\begin{equation}\label{E:dei}
\delta h_1= \delta h_2\Longrightarrow \chi(g\cdot h_1)= \chi(g\cdot h_2).
\end{equation}

In order to prove the statement above, fix an $r$-coloring $\chi$ of $\hom(a,c)$. We recursively
produce 
\[
g_n\in \hom(c_{n-1}, c_n),\, \dots ,\, g_1\in \hom(c_0, c_1)
\] 
starting with $g_n$ and ending with $g_1$ as follows. Having produced $g_n, \dots, g_{k+1}$, we consider the $r$-coloring of $\hom(a, c_k)$ given by
\[
\hom(a, c_k)\ni f \to \chi(g_n\cdot \cdot \cdot g_{k+1}\cdot f).
\]
By (c), we get $g_k\in \hom(c_{k-1}, c_k)$ such that
\begin{equation}\label{E:coco}
\chi(g_n\cdot \cdot \cdot g_{k+1} \cdot g_k \cdot f) \hbox{ is constant for }f\in \hom(a, c_{k-1})_{g_{k-1}'\cdot \cdot \cdot g'_1\cdot f_k'}
\end{equation}
and 
\begin{equation}\label{E:cucu}
(\delta g_k) \cdot g'_{k-1}\cdot \cdot \cdot g'_1\cdot f'_j =  g'_k\cdot g'_{k-1}\cdot \cdot \cdot g'_1\cdot f'_j,\, \hbox{ for }j> k.
\end{equation}

Now we show that 
\[
g = g_n\cdot \cdot \cdot g_1\in \hom(b,c) 
\] 
witnesses that the implication in \eqref{E:dei} holds. Let $h_1, h_2\in \hom(a,b)$ be such that
$\delta h_1 =\delta h_2$. This common value can be taken to be $f'_k$ for some $1\leq k\leq n$ by \eqref{E:exh}. 
For $i=1,2$, an iterative application of condition \eqref{E:cucu} gives
\begin{equation}\notag
\begin{split}
g'_{k-1}\cdot g'_{k-2}\cdot \cdot \cdot g'_{1}\cdot f'_k &= \delta g_{k-1} \cdot \delta g_{k-2} \cdot \cdot \cdot \delta g_{1} \cdot \delta h_i\\
&= \delta (g_{k-1}\cdot g_{k-2}\cdot \cdot \cdot g_{1}\cdot h_i), 
\end{split} 
\end{equation}
and so
\[
g_{k-1} \cdot g_{k-2}\cdot \cdot \cdot g_{1}\cdot h_i \in (\hom(a, c_{k-1}))_{g_{k-1}'\cdot \cdot \cdot g'_1\cdot f_k'},
\]
which in light of \eqref{E:coco} implies that
\begin{equation}\notag
\chi\bigl(g_n\cdot \cdot  \cdot g_k \cdot (g_{k-1}\cdot \cdot \cdot g_1 \cdot h_1)\bigr)= 
\chi\bigl(g_n\cdot \cdot \cdot g_{k} \cdot (g_{k-1}\cdot \cdot \cdot g_1 \cdot h_2)\bigr). 
\end{equation}
Thus, \eqref{E:dei} is proved. 
\end{proof}

The following corollary follows immediately from Corollary~\ref{C:pest}(i) and Theorem~\ref{T:lpp}. 

\begin{corollary}\label{C:lpes} 
Let $\Delta$ be a family of frank endofunctors of $C$. Let $a\in {\rm ob}(C)$. Assume that each $\delta\in \Delta$ fulfills {\rm (FP)} at $a$ 
and ${\rm hom}(a,b)$ is finite for all $b\in {\rm ob}(C)$. Then, for all $b\in {\rm ob}(C)$, 
\[
{\rm rd}(a,b) \leq \min \{ \big|\overline{\delta}\big({\rm hom}(a,b)\big)\big|\mid \overline{\delta}\in \langle \Delta\rangle\}. 
\]
\end{corollary}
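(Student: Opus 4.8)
The plan is to deduce Corollary~\ref{C:lpes} directly from the two results it cites, treating it as a short chaining argument rather than a fresh proof. The statement to establish is that, for a family $\Delta$ of frank endofunctors of $C$, an object $a$ at which every $\delta\in\Delta$ fulfills (FP), and with $\hom(a,b)$ finite for all $b$, one has ${\rm rd}(a,b)\leq \min\{|\overline{\delta}(\hom(a,b))| : \overline{\delta}\in\langle\Delta\rangle\}$ for every $b$. The two ingredients are Theorem~\ref{T:lpp}, which promotes (FP) at $a$ to (P) at each pair $a,b$ with $\delta(\hom(a,b))$ finite, and Corollary~\ref{C:pest}(i), which bounds ${\rm rd}(a,b)$ in terms of images $|\overline{\delta}(\hom(a,b))|$ under the semigroup $\langle\Delta\rangle$ once every member of $\Delta$ is frank and fulfills (P).

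First I would fix an arbitrary $\delta\in\Delta$ and note that, by hypothesis, $\delta$ fulfills (FP) at $a$. To apply Theorem~\ref{T:lpp} and conclude that $\delta$ fulfills (P) at $a,b$, I need $\delta(\hom(a,b))$ to be finite for every $b$; this is immediate because $\hom(a,b)$ itself is assumed finite, so its image under the function $\delta$ is finite as well. Hence Theorem~\ref{T:lpp} applies and yields that $\delta$ fulfills (P) at $a,b$ for every $b\in{\rm ob}(C)$, i.e.\ $\delta$ fulfills (P) at $a$. Since $\delta\in\Delta$ was arbitrary, every endofunctor in $\Delta$ is frank and fulfills (P) at $a$.

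Next I would invoke Corollary~\ref{C:pest}(i). Its hypotheses are that each endofunctor in $\Delta$ is frank and fulfills (P); the first is given outright, and the second was just obtained at the object $a$, which is exactly what is needed for the conclusion at pairs $a,b$. Applying that corollary gives ${\rm rd}(a,b)\leq \min\{|\overline{\delta}(\hom(a,b))| : \overline{\delta}\in\langle\Delta\rangle\}$ for each $b\in{\rm ob}(C)$, which is the desired inequality. The only subtlety worth flagging is the matching of ``(P) at $a$'' versus ``(P) everywhere'': Corollary~\ref{C:pest} is phrased for functors fulfilling (P), but inspection of its proof shows it uses Theorem~\ref{T:semig} and Proposition~\ref{P:bas}(i) only at the specific pair $a,b$, so fulfilling (P) at $a$ suffices. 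I expect no genuine obstacle here; the entire content is checking that the finiteness hypothesis feeds Theorem~\ref{T:lpp} correctly and that the frankness and localized (P) conditions line up with what Corollary~\ref{C:pest}(i) consumes.
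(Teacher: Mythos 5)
Your overall route is exactly the paper's: the paper proves Corollary~\ref{C:lpes} by citing precisely Theorem~\ref{T:lpp} and Corollary~\ref{C:pest}(i), and the first half of your argument is correct --- since $\hom(a,b)$ is finite, so is $\delta\big(\hom(a,b)\big)$, and Theorem~\ref{T:lpp} then gives that each $\delta\in\Delta$ fulfills (P) at $a$. The gap is in your final step, at the very point you flagged. Your claim that inspection of the proof of Corollary~\ref{C:pest}(i) shows it uses Theorem~\ref{T:semig} and Proposition~\ref{P:bas}(i) ``only at the specific pair $a,b$'' is not correct. Proposition~\ref{P:bas}(i) is indeed applied only at $a,b$, but the step ``by Theorem~\ref{T:semig}, each endofunctor in $\langle\Delta\rangle$ fulfills (P) at $a,b$'' is an induction on the length of the composition, and Theorem~\ref{T:semig}, applied to a factorization $\overline{\delta}=\delta\circ\gamma$ at the pair $a,b$, requires the \emph{outer} functor $\delta$ to fulfill (P) at the translated pair $\gamma(a),\gamma(b)$. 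Already for a two-fold composition $\delta_1\circ\delta_2$ (with $\delta_2$ applied first) one needs $\delta_1$ to fulfill (P) at $\delta_2(a),\delta_2(b)$, whose first coordinate is in general not $a$; for instance, the paper's functor $\partial_R$ moves objects, $\partial_R(k)=\max(k-1,0)$. Knowing only that each member of $\Delta$ fulfills (P) at $a$ therefore does not feed the inductive step, and your patch does not close the argument.

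You should know that this defect is inherited from the paper itself: its one-sentence proof has the same mismatch, since Theorem~\ref{T:lpp} delivers (P) at $a$ only, while Corollary~\ref{C:pest}(i) as stated consumes (P) at all pairs. In every application the paper makes (Lemma~\ref{L:ramram} for the classical and product Ramsey theorems; the functor $\partial^*$ for Fouch\'e's theorem) the relevant functor in fact fulfills (FP) at \emph{every} pair of objects and all hom-sets are finite, so Theorem~\ref{T:lpp} yields (P) everywhere and Corollary~\ref{C:pest}(i) applies verbatim; that is the intended use. If one wants the localized statement, a natural repair is to assume (FP) at $\gamma(a)$ for every $\gamma\in\langle\Delta\rangle$ (the hom-finiteness needed at the pairs $\gamma(a),\gamma(b)$ then follows from frankness of $\gamma$ and finiteness of $\hom(a,b)$ for all $b$), after which the Theorem~\ref{T:semig} induction goes through. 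As written, however, your proof --- like the paper's --- establishes the claimed bound only for $\overline{\delta}\in\Delta$ itself, not for compositions of length greater than one.
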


As an illustration, we derive now the classical Ramsey theorem and the product Ramsey theorem from the general results established earlier. 

\begin{corollary}
Given $k, l\in {\mathbb N}$, for each $r\in {\mathbb N}$, there exists 
$m\in {\mathbb N}$ such that, for each $r$-coloring of all $k$-element subsets of $[m]$, there 
exists $b\subseteq [m]$ of size $l$ such that all $k$-element subsets of $b$ get the same color. 
\end{corollary}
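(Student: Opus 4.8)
The plan is to recognize the statement as the assertion that ${\rm rd}(k,l)\le 1$ computed in the category $R$ of Example~3 in Section~\ref{Su:exp}, and then to read off this bound from Corollary~\ref{C:lpes} applied to the single frank functor $\partial_R$. First I would fix the dictionary between the combinatorial data and the category $R$. A morphism in ${\rm hom}(k,m)$ is exactly a $k$-element subset $(x,m)$ of $[m]$, so an $r$-coloring of ${\rm hom}(k,m)$ is the same thing as an $r$-coloring of the $k$-subsets of $[m]$; a morphism $g=(y,m)\in{\rm hom}(l,m)$ is an $l$-element subset $b=y$ of $[m]$; and, unwinding the composition rule of $R$, the set $g\cdot{\rm hom}(k,l)$ is precisely the collection of all $k$-subsets of $b$, since composing $(y,m)$ with a $k$-subset $(x,l)$ of $[l]$ transports $x$ through the increasing bijection $f_y\colon[l]\to y$ onto a $k$-subset of $y$. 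Under this translation, ${\rm rd}(k,l)\le 1$ is literally the conclusion sought: for every $r$ there is an object $m$ such that every $r$-coloring of the $k$-subsets of $[m]$ admits an $l$-subset $b$ all of whose $k$-subsets receive a single color.

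Next I would verify the hypotheses of Corollary~\ref{C:lpes} for $\Delta=\{\partial_R\}$ and $a=k$. The finiteness requirement is immediate, as $|{\rm hom}(k,b)|=\binom{b}{k}<\infty$ for every $b$. Condition (FP) at $k$ is supplied by Lemma~\ref{L:ramram}, which establishes (FP) for $\partial_R$ at all pairs of objects. It remains to confirm that $\partial_R$ is frank: given an object $k$ and a target object $n'$, I would take $n=n'+1$, so that $\partial_R(n)=n'$, and observe that every $(k-1)$-subset $z$ of $[n']$ is hit, since $z=x\setminus\{\max x\}$ for $x=z\cup\{n\}\in{\rm hom}(k,n)$; thus $\partial_R\big({\rm hom}(k,n)\big)={\rm hom}(\partial_R k,\partial_R n)$, exactly as frankness demands.

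With the hypotheses in place, Corollary~\ref{C:lpes} yields
\[
{\rm rd}(k,l)\le\min\{\,|\overline{\delta}({\rm hom}(k,l))|\mid\overline{\delta}\in\langle\{\partial_R\}\rangle\,\}=\min_{j\ge 1}\,|\partial_R^{\,j}({\rm hom}(k,l))|.
\]
The decisive computation is that $\partial_R^{\,k}$ collapses ${\rm hom}(k,l)$ to a single morphism: each application of $\partial_R$ deletes the current maximum of a subset, so applying it $k$ times to any $k$-element set $(x,l)$ strips away all of $x$ and produces the single morphism $(\emptyset,l-k)$ (where $l\ge k$), independent of $x$. Hence $|\partial_R^{\,k}({\rm hom}(k,l))|=1$, so ${\rm rd}(k,l)\le 1$, which is precisely the theorem (the cases $l<k$ being vacuous and $k=0$ trivial). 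I expect the only genuinely delicate points to be bookkeeping ones—pinning down the composition-to-subset dictionary and checking frankness of $\partial_R$—rather than any real combinatorial difficulty, since the entire pigeonhole content has already been absorbed into Lemma~\ref{L:ramram} and into the passage from (FP) to (P) carried out in Theorem~\ref{T:lpp}.
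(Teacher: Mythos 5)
Your proposal is correct and follows essentially the same route as the paper: invoke Lemma~\ref{L:ramram} for (FP), apply Corollary~\ref{C:lpes} with $\Delta=\{\partial_R\}$, and note that the $k$-fold composition of $\partial_R$ collapses ${\rm hom}(k,l)$ to the single morphism $(\emptyset, l\dotdiv k)$, giving ${\rm rd}(k,l)\le 1$. Your explicit checks of the subset--morphism dictionary and of the frankness of $\partial_R$ are details the paper leaves to the reader, and both are carried out correctly.
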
 

\begin{proof} Since, by Lemma~\ref{L:ramram}, $\partial_R$ fulfills (FP), and ${\rm hom}(k,l)$ is finite, for all $k,l\in {\rm ob}(R)$, 
it follows from that $\Delta = \{ \partial_R\}$ satisfies all the assumptions of Corollary~\ref{C:lpes}. Thus, we get the conclusion after noticing that 
the set 
\[
(\partial_R\circ \cdots \circ \partial_R) \big( {\rm hom}(k,l)\big)
\]
has only one element (its only element is the empty set), where $\partial_R$ is composed $k$ times. 
\end{proof}

\begin{corollary}\label{C:prodram} 
Let $r\in {\mathbb N}$ and let $k_1, \dots, k_l$ and $p_1, \dots, p_l$ be natural numbers. There exist natural numbers $q_1, \dots, q_l$ 
such that for each $r$-coloring of the set
\[
\{ (a_1, \dots, a_l)\mid a_i\subseteq [q_i], \, |a_i| = k_i, \hbox{ for }i\leq l\}
\]
there exist $b_1\subseteq [q_1], \dots, b_l\subseteq [q_l]$ with $|b_i|=p_i$, for each $i\leq l$, and such that
the set
\[
\{ (a_1, \dots, a_l)\mid a_i\subseteq b_i, \, |a_i| = k_i, \hbox{ for }i\leq l\}
\]
is monochromatic.
\end{corollary}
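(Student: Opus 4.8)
The plan is to recognize the product Ramsey theorem as the single inequality ${\rm rd}(a,b)\le 1$ in the finitely supported product category $\bigotimes_{\mathbb N} R$, where $R$ is the category from Example~3 of Section~\ref{Su:exp}, with objects supported on the first $l$ coordinates and
\[
a=(k_i)_{[l]}\quad\text{and}\quad b=(p_i)_{[l]}.
\]
The bridge is a translation dictionary. A morphism in ${\rm hom}\big((k_i)_{[l]},(q_i)_{[l]}\big)$ is a tuple $\big((x_i,q_i)\big)_{[l]}$ with $x_i\subseteq[q_i]$ and $|x_i|=k_i$, i.e.\ exactly a tuple $(a_1,\dots,a_l)$ of the kind colored in the statement (with the $q_i$ playing the role of the numbers to be produced). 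A morphism $g=\big((b_i,q_i)\big)_{[l]}$ in ${\rm hom}\big((p_i)_{[l]},(q_i)_{[l]}\big)$ records the sets $b_i\subseteq[q_i]$ of size $p_i$, and the composition rule of $R$, applied coordinatewise, sends ${\rm hom}\big((k_i)_{[l]},(p_i)_{[l]}\big)$ under $g$ to precisely the tuples $(a_1,\dots,a_l)$ with $a_i\subseteq b_i$ and $|a_i|=k_i$, each $a_i$ being the image of a $k_i$-subset of $[p_i]$ under the increasing bijection $[p_i]\to b_i$. Under this dictionary, ${\rm rd}(a,b)\le1$ is exactly the conclusion of the corollary.

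First I would check that the endofunctor $\Phi=\otimes_{\mathbb N}\partial_R$ of $\bigotimes_{\mathbb N}R$ is frank and fulfills (P). Frankness follows from Lemma~\ref{L:prfr} together with the (routine) frankness of $\partial_R$. For (P): by Lemma~\ref{L:ramram}, $\partial_R$ fulfills (FP) at every object, and since ${\rm hom}(k,l)$ is finite for all $k,l\in{\rm ob}(R)$, Theorem~\ref{T:lpp} upgrades this to (P) at every object of $R$. Theorem~\ref{T:pro} then yields that $\Phi$ fulfills (P) at every product object $(a_i)_{[l]}$; for objects with mismatched supports (P) is vacuous since the relevant hom-sets are empty, so $\Phi$ fulfills (P).

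Next I would apply Corollary~\ref{C:pest}(i) to the one-element family $\Delta=\{\Phi\}$, which gives
\[
{\rm rd}(a,b)\le\min\big\{\,\big|\overline\delta\big({\rm hom}(a,b)\big)\big|:\overline\delta\in\langle\Delta\rangle\big\}\le\big|\Phi^N\big({\rm hom}(a,b)\big)\big|
\]
for every $N\ge1$, since $\Phi^N\in\langle\Delta\rangle$. The closing computation is to take $N$ with $N\ge\max_i k_i$ and observe that $\Phi^N$ collapses ${\rm hom}(a,b)$ to a single morphism: applying $\partial_R$ to $(x_i,p_i)$ strips off $\max x_i$ one element at a time, so after $N\ge|x_i|=k_i$ steps each coordinate equals $\big(\emptyset,\max(p_i-N,0)\big)$, independently of $x_i$. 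Hence $\big|\Phi^N\big({\rm hom}(a,b)\big)\big|=1$, so ${\rm rd}(a,b)\le1$, and unwinding the dictionary gives the theorem.

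Since the genuine Ramsey-theoretic content is already isolated in Lemma~\ref{L:ramram} and propagated by the transfer theorems, the work here is essentially bookkeeping; the step most likely to cause friction is pinning down the translation dictionary exactly---verifying that composition in $R$ restricted to the chosen hom-sets really reproduces ``take $k_i$-subsets of $b_i$'' and that the morphisms of the product category match the colored tuples with the correct support $[l]$.
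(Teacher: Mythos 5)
Your proposal is correct and follows exactly the paper's route: Lemma~\ref{L:ramram} gives (FP) for $\partial_R$, Theorem~\ref{T:lpp} (using finiteness of the hom-sets of $R$) upgrades it to (P), Theorem~\ref{T:pro} transfers (P) to $\otimes_{\mathbb N}\partial_R$, and Corollary~\ref{C:pest}(i) yields the Ramsey-degree bound. The only difference is one of completeness, in your favor: you make explicit the frankness of $\partial_R$, the translation dictionary between morphisms of $\bigotimes_{\mathbb N}R$ and the tuples of subsets in the statement, and the computation that iterating the functor collapses $\hom\big((k_i)_{[l]},(p_i)_{[l]}\big)$ to a single morphism, all of which the paper leaves implicit in the phrase ``the conclusion follows from Corollary~\ref{C:pest}(i).''
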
 

\begin{proof}
By Lemma~\ref{L:ramram}, $\partial_R$ fulfills (FR). Since ${\rm hom}(k,l)$ is finite, for all $k,l\in {\rm ob}(R)$, Theorem~\ref{T:lpp} implies 
that $\partial_R$ fulfills (P).  Thus, by Theorem~\ref{T:pro}, $\otimes_{\mathbb N} \partial_R \colon \bigotimes_{\mathbb N} R\to \bigotimes_{\mathbb N} R$ fulfills (P). 
Now, the conclusion follows from Corollary~\ref{C:pest}(i). 
\end{proof}

\subsection{Example---Fouch{\'e}'s Ramsey theorem for trees}

We present here one more elaborate example of using the general theory. We derive from it Fouch{\'e}'s Ramsey theorem for trees as proved in \cite{Fou}.

First, we collect basic definitions concerning trees and a type of morphism between them. By a {\bf tree}
we understand a {\em finite, non-empty} partial order such that each two elements have a common predecessor and
the set of predecessors of each element is linearly ordered. 
A {\bf leaf} is a maximal element of a tree.
By convention, we regard every node of a tree as one of its
own predecessors and as one of its own successors.

Each tree $T$ carries a binary function $\wedge_T$ that assigns to each
$v,w\in T$ the largest element $v\wedge_Tw$ of $T$ that is a predecessor of
both $v$ and $w$. 
For a tree $T$ and $v\in T$, let
\[
{\rm im}_T(v)
\]
be the set of all {\bf immediate successors} of $v$, and
we do not regard $v$ as one of them.
Let
\[
{\rm ht}_T(v)
\]
be the cardinality of the set of all predecessors of $v$ (including $v$), and let
\[
{\rm ht}(T) = \max \{ {\rm ht}_T(v)\colon v\in T\}.
\]
For a tree $T$, let
\[
{\rm br}(T)
\]
be the maximum of cardinalities of ${\rm im}_T(v)$ for $v\in T$.

A tree $T$ is called {\bf ordered} if for each $v\in T$ there is a fixed linear order of ${\rm im}_T(v)$. Such an assignment allows us to define
the lexicographic linear order $\leq_T$ on all the nodes of $T$ by specifying that $v\leq_T w$ precisely when  
\begin{enumerate} 
\item[---] $v$ is a predecessor of $w$, or 

\item[---] $v$ is not a predecessor of $w$, and $w$ is not a predecessor of $v$, and 
the predecessor of $v$ in ${\rm im}_T(v\wedge w)$ is less than or equal to the predecessor of $w$ in
${\rm im}_T(v\wedge w)$ in the given order on ${\rm im}_T(v\wedge w)$.
\end{enumerate} 

A {\bf height preserving embedding} $f$ from an ordered tree $S$ to an ordered tree $T$ is an injective function 
$f\colon S\to T$ such that 
\begin{enumerate} 
\item[---] $f$ is order preserving between $\leq_S$ and $\leq_T$,

\item[---] $f(v\wedge_Sw) = f(v)\wedge_T f(w)$, for $v,w\in S$, and 

\item[---] ${\rm ht}_S(v) = {\rm ht}_T(f(v))$, for $v\in S$.
\end{enumerate} 
Note that preservation of order by $f$ is equivalent to saying that
for every $v\in S$ and all $w_1,w_2\in {\rm im}_S(v)$
with $w_1\leq _S w_2$, we have $f(w_1)\leq_T f(w_2)$ in ${\rm im}_T(f(v))$.

\begin{theorem}[Fouch{\'e}~\cite{Fou}]\label{T:fou} Let $r\in {\mathbb N}$ and let $S$ and $T$ be ordered trees. There is an ordered tree $V$
such that ${\rm ht}(V) = {\rm ht}(T)$ and for each $r$-coloring of all height preserving embeddings from $S$ to $V$
there is a height preserving embedding $g\colon T\to V$ such that the set
\[
\{ g\circ f\colon f\hbox{ a height preserving embedding of }S\hbox{ to }T\}
\]
is monochromatic.
\end{theorem}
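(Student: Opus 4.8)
The plan is to realize Fouch\'e's theorem as a single Ramsey-degree computation in a category of trees and then invoke Corollary~\ref{C:pest}. I would let $\mathcal{T}$ be the category whose objects are all ordered trees and whose morphisms are the height preserving embeddings, composed as functions. Fixing $h={\rm ht}(T)$, the theorem is exactly the statement that ${\rm rd}(S,T)\leq 1$ in $\mathcal{T}$, together with the observation that the witnessing object can be taken of height $h$: since a height preserving embedding sends height-$j$ nodes to height-$j$ nodes and both $S,T$ have height at most $h$, every morphism out of $S$ or $T$ has image in the bottom $h$ levels of its target, so truncating any witness $V$ to its nodes of height at most $h$ alters neither ${\rm hom}(S,V)$ nor the relevant $g$; and the embedding $T\to V$ forces ${\rm ht}(V)\geq h$, hence equality.

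To bound ${\rm rd}(S,T)$ I would use the endofunctor $\partial$ of $\mathcal{T}$ that deletes the lexicographically largest leaf of a tree and restricts morphisms to the surviving nodes. This is a functor for the same reason $\partial_R$ of Example~3 is: an embedding carries the lex-largest leaf of its domain to the lex-largest element of its image, so no other node is sent to the deleted top leaf of the target, and the restricted map lands in the smaller target. Identifying each embedding $f\colon S\to T$ with the increasing enumeration of its image inside the linear order $(T,\leq_T)$---which determines $f$, since an increasing map is determined by its image---turns $\partial$ into ``delete the maximum of the image'', literally the action of $\partial_R$. Consequently $\partial^{\,|S|-1}$ reduces the source to its root, which embeds uniquely, so $\partial^{\,|S|-1}\big({\rm hom}(S,T)\big)$ is a singleton; granting that $\partial$ is frank and fulfills {\rm (P)}, Corollary~\ref{C:pest}(i) applied to $\Delta=\{\partial\}$ gives ${\rm rd}(S,T)\leq 1$.

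The crux is the verification of {\rm (P)} for $\partial$, which I would carry out by modeling (Theorem~\ref{T:mod}) with the Ramsey functor $\partial_R$---which fulfills {\rm (P)} by Lemma~\ref{L:ramram} and Theorem~\ref{T:lpp}---or, if room to encode the separate branches is needed, with a finite product of copies of $\partial_R$, which fulfills {\rm (P)} by Theorem~\ref{T:pro}. Under the image coding a height preserving embedding of $S$ is a subset of the target constrained by order, meet, and height preservation, and deleting the top leaf is exactly $\partial_R$ deleting the maximum; this makes the compatibility clause \eqref{E:crtr} hold, since both derivatives discard the same datum. I would build the cross-relatedness pair $(\phi,\psi)$ so that $\phi$ records an embedding as the subset coding the placement of $S$ in the pattern while $\psi$ converts an $R$-morphism into a height preserving embedding of $T$ into a suitably blown-up witness $V$, and then verify the generalized composition identity and the well-definedness \eqref{E:subs}.

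The main obstacle is precisely this modeling step. The honest difficulty is that a tree embedding is a \emph{constrained} subset, so one cannot simply identify ${\rm hom}(S,T)$ with a block of subsets; one must instead engineer the blown-up $V$ and the reassembly $\psi$ so that the unconstrained Ramsey structure on subsets delivered by $\partial_R$ is channelled into genuine height preserving embeddings respecting meets and the per-level branching, and arrange $\phi$ to depend on its auxiliary argument only as weakly as cross-relatedness permits. A secondary point is frankness of $\partial$: unlike for $\partial_R$, the deleted leaf must be reinstated as a child of a node that varies with the embedding, which I would accommodate by enriching objects of $\mathcal{T}$ to carry an ambient ``frame'' tree (as $R$ remembers the ambient $[n]$), or else by appealing to the unifying assumption that functors may be taken frank. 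Beyond the classical Ramsey theorem already packaged in $\partial_R$ and the composition and product theorems, no further Ramsey-theoretic input should be needed.
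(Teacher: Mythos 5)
Your reduction of the theorem to a Ramsey-degree computation, and the truncation argument giving ${\rm ht}(V)={\rm ht}(T)$, are fine; the trouble is that the functor you base everything on lacks the property your final step needs. Frankness of single-leaf deletion $\partial$ is not a ``secondary point'': it is false, and it cannot be discharged by ``appealing to the unifying assumption'' (that assumption is a hypothesis to be verified for each concrete functor, not a device that makes functors frank). Concretely, let $S$ be the three-node chain $r<w<m$ and let $T'$ be the tree consisting of a root with two children $x<y$. Any $T$ with $\partial T=T'$ is $T'$ plus one new leaf, which must be the lex-largest leaf of $T$; so the new leaf is either a child of the root above $y$, in which case ${\rm ht}(T)=2$ and ${\rm hom}(S,T)=\emptyset$, or a child of $y$, in which case ${\rm hom}(S,T)$ is a singleton and $\partial\big({\rm hom}(S,T)\big)$ contains only the embedding sending $w$ to $y$. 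In both cases $\partial\big({\rm hom}(S,T)\big)$ is a proper subset of ${\rm hom}(\partial S,\partial T)$, which has two elements ($w\mapsto x$ and $w\mapsto y$). The contrast with $\partial_R$ is structural: $[n+1]$ has a single point above all of $[n]$, whereas no single leaf added to a tree sits above every node that could receive the parent of the deleted leaf. Without frankness, Theorem~\ref{T:semig} and hence Corollary~\ref{C:pest} are unavailable, so even granting (P) for $\partial$ you cannot pass to the iterates $\partial^{|S|-1}$, and you only get the useless bound ${\rm rd}(S,T)\leq\big|\partial\big({\rm hom}(S,T)\big)\big|$. (Your category choice compounds this: allowing morphisms between trees of different heights breaks frankness even for the paper's functor, which is why the paper declares ${\rm hom}(S,T)=\emptyset$ when ${\rm ht}(S)\neq{\rm ht}(T)$.) The paper's proof sidesteps all of this by deleting the entire top level: for $\partial^*$, frankness holds (Lemma~\ref{L:stfr}) precisely because one may graft ${\rm br}(S)$ new children below \emph{every} top leaf of $T'$, a freedom the one-leaf functor does not have.

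The second gap is the step you yourself call the crux: no cross-relatedness pair $(\phi,\psi)$ is constructed, and there is a structural reason it is hard to do for full (P). The product/subset coding of tree embeddings (one subset per top leaf of the source, as in Corollary~\ref{C:prodram}) only exists \emph{after} the embedding has been fixed below the top: which ambient sets the subsets live in depends on that lower placement, and composition with $g\in{\rm hom}(T,V)$ moves the lower placement, i.e., transforms the fiber index $h\in{\rm hom}(\partial S,\partial T)$ into $\partial g\cdot h$, whereas in a fixed (product of copies of) $R$ the index set cannot move. This is exactly the difficulty that the localized condition (FP) was introduced to bypass: there one fixes $f'\in s$ and an identity-like $g'$ first, controls only the single fiber ${\rm hom}(S,T)_{f'}$, and the product Ramsey theorem applies directly; the passage from (FP) to (P) is then achieved by the recursion inside Theorem~\ref{T:lpp}, not by a one-shot pigeonhole. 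Accordingly, the paper's route is: whole-level deletion $\partial^*$, frankness via Lemma~\ref{L:stfr}, a direct verification of (FP) for $\partial^*$ from Corollary~\ref{C:prodram}, and then Corollary~\ref{C:lpes}. If you wish to keep single-leaf deletion, you would have to redesign the objects (your ``frame'' idea) to restore frankness and, even then, prove (FP) rather than attempt (P) by modeling; neither is carried out in your proposal.
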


We define a category and an endofuctor on it that are appropriate for the theorem above. 
Consider the category $\mathcal T$ whose objects are ordered trees.
Given $S,T\in {\rm ob}({\mathcal T})$, 
with ${\rm ht}(S) = {\rm ht}(T)$, ${\rm hom}(S,T)$ consists of all height preserving embeddings from $S$ to $T$. There are no other 
morphisms in $\mathcal T$; in particular, if ${\rm ht}(S)\not= {\rm ht}(T)$, then ${\rm hom}(S,T)=\emptyset$. 

We now define a functor $\partial^*\colon {\mathcal T}\to {\mathcal T}$. 
Given $T\in {\rm ob}({\mathcal T})$, put
\begin{equation}\notag
\partial^* T =
\begin{cases}
\{ v\in T\colon {\rm ht}(v)<{\rm ht}(T)\},& \text{ if ${\rm ht}(T)>1$;}\\
T,& \text{ if ${\rm ht}(T)=1$.}
\end{cases}
\end{equation}
We will write $T^*$ for $\partial^* T$. 
Now, define the functor $\partial^*$ on morphisms of $\mathcal T$ by letting, for $f\colon S\to T$,
\[
\partial^* f =  f\res S^*.
\]

\begin{lemma}\label{L:stfr} 
$\partial^*$ is a frank functor.
\end{lemma}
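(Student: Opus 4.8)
The plan is to verify directly the defining property of frankness for $\partial^*$. Given an ordered tree $S$ (the object $a$) and an ordered tree $T'$ (the object $b'$), I must produce an ordered tree $T$ with $\partial^* T = T'$ and $\partial^*\big({\rm hom}(S,T)\big) = {\rm hom}(\partial^* S,\partial^* T) = {\rm hom}(\partial^* S, T')$. Functoriality of $\partial^*$ (preservation of identities, and of composition, using that every morphism preserves heights so that $\partial^* f = f\res S^*$ genuinely lands in $T^*$) is a routine check I would dispatch first. The inclusion $\subseteq$ in the second displayed equality is automatic, as noted right after the definition of frank; in particular, if ${\rm hom}(\partial^* S, T')$ is empty then so is ${\rm hom}(S,T)$ for any $T$ with $\partial^* T = T'$, and that case needs nothing further. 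So the real content is the reverse inclusion when ${\rm hom}(\partial^* S, T')\neq\emptyset$.

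For the construction of $T$: except in the degenerate situation ${\rm ht}(S) = {\rm ht}(T') = 1$ (where I simply take $T = T'$, so that $\partial^* T = T'$ and $\partial^* S = S$, making the required equality immediate), I would build $T$ from $T'$ by attaching a fresh top level. Concretely, to every leaf $u$ of $T'$ with ${\rm ht}_{T'}(u) = {\rm ht}(T')$, I adjoin ${\rm br}(S)$ new immediate successors, linearly ordered among themselves, and leave the rest of $T'$ untouched. Since every new node sits at height ${\rm ht}(T')+1$ while all old nodes have height $\le {\rm ht}(T')$, we get ${\rm ht}(T) = {\rm ht}(T')+1 > 1$ and hence $\partial^* T = \{v\in T : {\rm ht}_T(v) < {\rm ht}(T)\} = T'$, as required.

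The heart of the argument is the reverse inclusion. Assuming ${\rm hom}(\partial^* S, T')\neq\emptyset$, a short height computation forces ${\rm ht}(\partial^* S) = {\rm ht}(T')$ and, since we are outside the degenerate case, ${\rm ht}(S)\ge 2$; combined with ${\rm ht}(T) = {\rm ht}(T')+1$ this gives ${\rm ht}(S) = {\rm ht}(T)$, so morphisms $S\to T$ can exist. Given $f'\in{\rm hom}(\partial^* S, T')$, I would extend it to $f\colon S\to T$ by setting $f = f'$ on $S^* = \partial^* S$ and, for each top-level leaf $w$ of $S$ with immediate predecessor $v$ (a node of height ${\rm ht}(T')$, hence a top leaf of $\partial^* S$ whose image $f'(v)$ is a top leaf of $T'$ carrying the new successors), sending the ordered set ${\rm im}_S(v)$ by an order-preserving injection into the ${\rm br}(S)\ge |{\rm im}_S(v)|$ new successors of $f'(v)$. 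By construction $\partial^* f = f\res S^* = f'$, so $f'\in\partial^*\big({\rm hom}(S,T)\big)$ once $f\in{\rm hom}(S,T)$ is confirmed.

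The main obstacle is precisely confirming that this $f$ is a height preserving embedding. Injectivity and height preservation are clear (old nodes map to old nodes via the embedding $f'$, distinct new nodes map to distinct new nodes at the top height). The delicate points are meet preservation and preservation of the lexicographic order for pairs involving the newly defined top-level nodes: for a top leaf $w$ of $S$ over $v$ and any other node $u$, I would reduce $u\wedge_S w$ to $u\wedge_S v$ (as $w$ only adds one level above $v$) and correspondingly $f(u)\wedge_T f(w)$ to $f(u)\wedge_T f'(v)$, then invoke meet preservation of $f'$ on the downward-closed set $\partial^* S$; order preservation follows from $f'$ being order preserving together with the order-preserving choice inside each sibling group ${\rm im}_S(v)\to{\rm im}_T(f'(v))$. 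These verifications, while routine given the downward-closedness of $S^*$ and $T^*$, are where I expect the care to be needed.
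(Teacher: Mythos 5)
Your proof is correct and takes essentially the same route as the paper: build $T$ by attaching ${\rm br}(S)$ new top-level successors to the maximal-height leaves of $T'$, then extend each $f'\colon S^*\to T'$ to $f\colon S\to T$ by mapping every sibling set ${\rm im}_S(v)$ order-preservingly and injectively into ${\rm im}_T(f'(v))$. If anything, you are more careful than the paper's write-up, both in isolating the degenerate case ${\rm ht}(S)={\rm ht}(T')=1$ (where $T=T'$ is in fact forced) and in flagging the meet- and order-preservation checks that the paper dismisses as clear.
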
 

\begin{proof} 
It is clear that $\partial^*$ is a functor as for morphisms $f\colon S\to T$ and $g\colon T\to V$ we have $f(S^*)\subseteq T^*$ and hence 
\[
\partial^*( g\circ f) = (g\circ f) \res S^* = g\circ (f\res S^*) = (g\res T^*)\circ (f\res S^*) = (\partial^* g)\circ (\partial^* f). 
\]

Now, to check that $\partial^*$ is frank, we fix two objects of $\mathcal T$, that is, two ordered trees $S$ and $T'$. 
We need to find $T\in {\rm ob}({\mathcal T})$ such that $T^*=T'$ and $\partial^*\big( {\rm hom}(S,T)\big) = {\rm hom}(S^*, T^*)$. 
If ${\rm ht}(S) \not= {\rm ht}(T')+1$, then any $T\in {\mathcal T}$ with $T^*= T'$ works, since then ${\rm hom}(S^*, T^*)=\emptyset$ and 
${\rm hom}(S,T)=\emptyset$. So we assume that ${\rm ht}(S) = {\rm ht}(T')+1$. We need to find $T\in {\rm ob}({\mathcal T})$ such that 
\begin{enumerate}
\item[---] $T^*= T'$ and

\item[---] for each height preserving embedding $f'\colon S^*\to T^*$, there is a height preserving embedding $f\colon S\to T$
such that $f' =f\res S^*$. 
\end{enumerate}
One defines $T$ so that ${\rm ht}(T) = {\rm ht}(S)$, $T^*=T'$, and, for each leaf $w$ of $T'$, 
\[
|{\rm im}_T(w)| = {\rm br}(S). 
\]
One then linearly orders $T$ by extending the linear order on $T'$ in an arbitrary way as long as the resulting order makes $T$ into an ordered tree. 
It is then clear that each height preserving embedding $f'\colon S^*\to T'$ extends to a height preserving embedding $f\colon S\to T$ 
by mapping elements of ${\rm im}_S(v)$, for each leaf $v$ of $S^*$, to ${\rm im}_T(f'(v))$ in an injective and order preserving fashion. 
\end{proof}

\begin{proof}[Proof of Theorem~\ref{T:fou}] 
To obtain the conclusion of the theorem, one needs to check that 
${\rm rd}(S,T)\leq 1$ for all $S,T\in {\rm ob}({\mathcal T})$. Note that the set 
\[
(\partial^*\circ\cdots \circ \partial^*)\big( {\rm hom}(S,T)\big) 
\]
has at most one element, where $\partial^*$ is composed ${\rm ht}(S)-1$ many times. Indeed, if ${\rm ht}(S)\not= {\rm ht}(T)$, then ${\rm hom}(S,T)=\emptyset$;
if ${\rm ht}(S)={\rm ht}(T)$, then this set contains only the unique function from a one-node tree to a one-node tree. 
Thus, by Lemma~\ref{L:stfr} and Corollary~\ref{C:lpes}, it will suffice to check that $\partial^*$ fulfills (FP) at each pair of objects of $\mathcal T$. 
So fix $r\in {\mathbb N}$, $S,T\in {\rm ob}({\mathcal T})$, and $\emptyset\not= s\subseteq {\rm hom}(S^*,T^*)$. Non-emptiness of $s$ implies that 
${\rm ht}(S)= {\rm ht}(T)$; we call this common height $h$. We need to produce $V\in {\rm ob}({\mathcal T})$ with ${\rm ht}(V)=h$ and 
$f'\in s$ and $g'\in {\rm hom}(T^*, V^*)$ so that the conclusion of (FP) holds for these choices. 

We let $f'\in s$ be arbitrary and $g'$ be equal to the identity map on $T^*$. 
The tree $V$ will be chosen so that $V^*=T^*$. It suffices to specify, for each leaf $w$ of $T^*$ 
with ${\rm ht}_{T^*}(w)=h-1$, the number of elements in ${\rm im}_V(w)$. If $w$ is not of he form $f'(v)$ for a leaf $v$ of $S^*$, 
let ${\rm im}_V(w)$ be empty. Now, let $v_1, \dots, v_l$ list all the leaves of $S^*$ of height $h-1$; since ${\rm ht}(S)=h$ such leaves exist. Put 
\[
K_i={\rm im}_S(v_i) \;\hbox{ and }\; k_i = |K_i| ,\hbox{ for } 1\leq i\leq l,
\]
and also 
\[
P_i = {\rm im}_T\big(f'(v_i)\big) \;\hbox{ and }\; p_i = |P_i|,\hbox{ for } 1\leq i\leq l. 
\]
Let the natural numbers $q_i$, for $1\leq i\leq l$, be gotten from Corollary~\ref{C:prodram} for the sequences $(k_i)$ and $(p_i)$, 
and the number of colors $r$. Let $Q_i= {\rm im}_V\big(f'(v_i)\big)$ have size $q_i$. This procedure defines $V$. 

Now, it is enough to do the following: 
for an $r$-coloring $\chi$ of ${\rm hom}(S, V)$, find a height preserving embedding $g\colon T\to V$ such that 
\begin{enumerate}
\item[(i)] $\chi(g\circ f)$ is constant on the set of height preserving embeddings $f\colon S\to T$ with $f\res S^* = f'$; 

\item[(ii)] $g\res T^*= {\rm id}_{T^*}$.
\end{enumerate}
We fix $\chi$ as above. For a tuple $(a_i)$ such that 
$a_i\subseteq Q_i$ and $|a_i|=k_i$, for each $1\leq i\leq l$, define $f_{(a_i)}\colon S \to V$  by letting 
\[
\begin{split} 
&f_{(a_i)}\res S^* = f';\\
&f_{(a_i)}\res K_i\colon K_i\to a_i\hbox{ the unique order preserving function.}
\end{split}
\]
Note that $f_{(a_i)}$ is a height preserving embedding. Further note that 
\begin{equation}\label{E:cofg} 
\begin{split}
\Big( f\in {\rm hom}(S, T),\, f\res S^* =f'\hbox{ and }\, &g\in {\rm hom}(T, V),\,g\res T^*={\rm id}_{T^*} \Big) \Longrightarrow\\
&\Big(g\circ f = f_{(a_i)}, \hbox{ where }a_i = g\big(f(K_i)\big)\Big).
\end{split}
\end{equation} 

Color tuples $(a_i)$ such that 
$a_i\subseteq Q_i$ and $|a_i|=k_i$ by letting 
\[
\chi'\big((a_i)\big) = \chi(f_{(a_i)}). 
\]
By our choice of of $(q_i)$, there are sets $b_i\subseteq Q_i$ with $|b_i|= p_i$ and such that all tuples $(a_i)$ with $a_i\subseteq b_i$ 
get the same color with respect to $\chi'$. Let $g\colon T\to V$ be defined by letting 
\[
\begin{split}
&g\res T^*={\rm id}_{T^*};\\
&g\res Q_i\colon Q_i\to b_i \hbox{  the unique order preserving function.}
\end{split}
\]
By \eqref{E:cofg}, for each $f\colon S\to T$ with $f\res S^* = f'$, we have that 
\[
g\circ f = f_{(a_i)},
\]
for some $(a_i)$ with $a_i\subseteq b_i$ and $|a_i|=k_i$. 
Thus, $\chi(g\circ f)$ is constant, as required by (i). Obviously, $g$ fulfills (ii). 
\end{proof}


\begin{thebibliography}{10}
\bibitem{Fou} W.\,L.\,Fouch{\'e}, {\em Symmetries and Ramsey properties of trees}, Discrete Math.\;197/198 (1999), 325--330.

\bibitem{GLR} R.\,L.\,Graham, K.\,Leeb, B.\,L.\,Rothschild, {\em Ramsey's theorem for a class of categories}, 
Adv. Math.\;8 (1972), 417--433.

\bibitem{Gro} M.\,L.\,Gromov, {\em Colorful categories}, Russian Math.\;Surveys 70 (2015), 591--655.

\bibitem{HN} J.\,Hubi{\v c}ka, J.\,Ne{\v s}et{\v r}il, {\em All those Ramsey classes (Ramsey classes with closures and forbidden homomorphisms)}, 
Adv.\;Math.\;356 (2019), 106791, 89 pages. 

\bibitem{SJ} S.\;Junge, {\em Categorical view of the Partite Lemma in structural Ramsey Theory}, preprint, January 2022. [arXiv:2201.06144]

\bibitem{KPT} A.\,S.\,Kechris, V.\,Pestov, S.\,Todorcevic, {\em Fra{\"i}ss{\'e} limits, Ramsey theory, and topological dynamics of 
automorphism groups}, Geom.\,Funct.\,Analysis 15 (2005), 106--189.


\bibitem{Lee} K.\,Leeb, {\em Vorlesung {\"u}ber Pascaltheorie}, Arbeitsberichte des Instituts f{\"u}r Mathematische 
Maschinen und Datenverarbeitung, Friedrich Alexander Universit{\"a}t, Erlangen-N{\"u}rnberg, 6 (1973).

\bibitem{Mas2} D.\,Ma{\v s}ulovi{\'c}, 
{\em A dual Ramsey theorem for permutations}, Electron.\;J.\;Combin. 24 (2017), paper no.\;3.39, 12 pp.

\bibitem{Mas} D.\,Ma{\v s}ulovi{\'c}, {\em Pre-adjunctions and the Ramsey property}, European J.\;Combin.\;70 (2018), 268--283. 


\bibitem{Nes} J.\,Ne{\v s}et{\v r}il, {\em Ramsey theory}, in {\em Handbook of Combinatorics}, pp.\,1331--1403, Elsevier, 1995.

\bibitem{Sol} S.\,Solecki, {\em Abstract approach to finite Ramsey theory and a self-dual Ramsey theorem}, Adv.\;Math.\;248 (2013), 1156--1198.

\bibitem{Sol2} S.\,Solecki, {\em Abstract approach to Ramsey theory and Ramsey theorems for finite trees}, in {\em Asymptotic Geometric Analysis},
Fields Institute Communications, Springer, 2013, pp.\,313--340. 

\bibitem{Solt} S.\,Solecki, {\em Dual Ramsey theorem for trees}, Combinatorica, to appear. 

\bibitem{Tod} S. Todorcevic, {\em Introduction to Ramsey Spaces}, Annals of Mathematics Studies, 174, Princeton University Press, 2010.

\bibitem{Zuc} A.\,Zucker, {\em Topological dynamics of automorphism groups, ultrafilter combinatorics, and the generic point problem},  
Trans.\,Amer.\,Math.\,Soc. 368 (2016), 6715--6740.
\end{thebibliography}
\end{document}